\documentclass[10pt,twoside]{siamart1116}

\usepackage[T1]{fontenc}
\usepackage{lmodern}
\usepackage{microtype}
\usepackage[english]{babel}
\usepackage{amsfonts,amssymb,mathtools}
\usepackage{booktabs,longtable,array,tabularx}
\usepackage{float}
\usepackage{enumitem}

\renewcommand{\theequation}{\thesection.\arabic{equation}}
\renewtheorem{theorem}{Theorem}[section]

\newcommand{\Names}{S. Zeng}
\newcommand{\Title}{The General Subgroup Permanental-Dominance Conjecture in Order Four}
\newcommand{\ShortTitle}{Permanental dominance in order four}

\newcommand{\CC}{\mathbb C}
\newcommand{\RR}{\mathbb R}

\newcommand{\Sn}{\mathfrak S}
\newcommand{\per}{\operatorname{per}}

\newcommand{\diag}{\operatorname{diag}}
\newcommand{\Rea}{\operatorname{Re}}
\newcommand{\Ima}{\operatorname{Im}}
\newcommand{\adj}{\operatorname{adj}}

\newcommand{\ip}[2]{\left\langle #1,#2\right\rangle}
\newcommand{\norm}[1]{\left\lVert #1\right\rVert}

\newcommand{\mat}[1]{\begin{pmatrix}#1\end{pmatrix}}
\newcommand{\psd}{\succeq 0}
\newcommand{\pd}{\succ 0}

\hypersetup{
  colorlinks=true,
  linkcolor=blue!55!black,
  citecolor=blue!55!black,
  urlcolor=blue!55!black,
  pdftitle={The General Subgroup Permanental-Dominance Conjecture in Order Four},
  pdfauthor={Siwei Zeng},
  pdfsubject={The complete order-four case of general subgroup permanental dominance},
  pdfkeywords={permanent, generalized matrix function, immanant,
    positive-semidefinite matrix, character theory, Gram matrix}
}

\begin{document}

\bibliographystyle{siamplain}
\setcounter{page}{1}
\thispagestyle{empty}

\title{\Title}

\author{Siwei Zeng\thanks{Independent Researcher, Beijing, China
(\href{mailto:endlesslethe@gmail.com}{endlesslethe@gmail.com}).
ORCID: \href{https://orcid.org/0009-0008-8914-7143}{0009-0008-8914-7143}.}}

\markboth{\Names}{\ShortTitle}
\maketitle

\begin{abstract}
The general subgroup permanental-dominance conjecture was previously known
only through matrix order three.  This paper proves its complete order-four
case: for every subgroup $H\leq\Sn_4$, every irreducible complex character
$\chi$ of $H$, and every $4\times4$ Hermitian positive-semidefinite matrix
$A$, it establishes $d_\chi^H(A)/\chi(1)\leq\per A$.  Unlike the usual
immanant specialization, the result covers all thirty-seven
irreducible-character cases arising from the eleven conjugacy classes of
subgroups of $\Sn_4$.  Thirty-five cases follow from general principal-minor,
moment, and block-contraction inequalities.  The two non-real $A_4$
characters are reduced to polynomial nonnegativity on the cone of
$3\times3$ positive-semidefinite Gram matrices and are resolved by a
rank-one sum-of-squares identity, an exact positive-definite interior
certificate, rational Gram certificates, and closure of the Gram cone.
\end{abstract}

\begin{keywords}
permanent, generalized matrix function, immanant, positive-semidefinite
matrix, character theory, Gram matrix
\end{keywords}

\begin{AMS}
15A15, 15B57, 20C15
\end{AMS}

\section{Introduction and main theorem}\label{sec:introduction}

The general subgroup permanental-dominance conjecture asks whether every
normalized generalized matrix function associated with an irreducible
character of every subgroup of $\Sn_n$ is bounded above by the permanent on
the Hermitian positive-semidefinite cone.  Before the present work, this
form of the conjecture was known only through matrix order three
\cite{tabata2015,wanless2022}.  The main result below establishes the complete
order-four case.  It is not merely the five-character $\Sn_4$ immanant
specialization: the eleven conjugacy classes of subgroups of $\Sn_4$ produce
thirty-seven irreducible-character cases, including two non-real $A_4$
characters that require a new Gram-cone argument.

The determinant and permanent are the two best-known members of the family
of generalized matrix functions.  Schur's classical theorem places the
determinant below every normalized irreducible immanant on the Hermitian
positive-semidefinite cone \cite{schur1918}.  Marcus proved the permanent
analogue of Hadamard's inequality \cite{marcus1963}, and Lieb strengthened
that direction through a block inequality and conjectured that the permanent
lies above every normalized immanant \cite{lieb1966}.  The formulation became
known as the permanental-dominance conjecture through the work of Merris
\cite{merris1987}.  In its subgroup form, for an irreducible character $\chi$
of $H\leq\Sn_n$, the conjectured inequality is
\[
  \frac{1}{\chi(1)}
  \sum_{\sigma\in H}\chi(\sigma)\prod_{i=1}^n A_{i,\sigma(i)}
  \leq \per A
  \qquad (A\psd).
\]
Taking $H=\Sn_n$ recovers the usual irreducible-immanant statement.
This all-subgroup conjecture is distinct from the stronger
permanent-on-top conjecture, which asserts that the permanent is the largest
eigenvalue of the Schur power matrix and is known to be false in general
\cite{wanless2022}.

The conjecture has generated a substantial literature on dominance
orderings and sufficient analytic mechanisms.  Representative advances
include identities and inequalities for generalized matrix functions
\cite{merriswatkins1985}, the ordering of hook immanants
\cite{heyfron1988}, row-appending and tensor methods
\cite{pate1991,pate1998,pate1999}, and the spectral approach of Soules
\cite{soules1994}.  Broader accounts and historical surveys are given by
Cheon and Wanless \cite{cheonwanless2005}, Zhang \cite{zhang2016}, and
Wanless \cite{wanless2022}.  Rodtes obtained further identities and explicit
sufficient criteria for classes of positive-semidefinite matrices
\cite{rodtes2024}.  More recent work relates immanant inequalities to
weight-space methods \cite{jingliu2025}, while the immanant form of
permanental dominance has acquired an operational interpretation through
generalized bosonic bunching probabilities \cite{gellerknill2026}.

Most of the literature concerns the immanant specialization $H=\Sn_n$.
That specialization is known in several broad families and, in particular,
through order thirteen; these results do not by themselves cover characters
of proper subgroups \cite{pate1998,zhang2016,wanless2022}.  For matrix order
$n=3$, Tabata
completed the description of the possible positions of normalized
generalized matrix functions, including the remaining proper-subgroup cases
\cite{tabata2015}.  In order four, Barrett, Hall, and Loewy characterized the
cone of class-function inequalities and hence the inequalities among linear
combinations of $\Sn_4$-immanants \cite{barretthallloewy1999}.  James also
exhibited a positive-semidefinite matrix giving equality for one of the
non-real $A_4$ characters
\cite{james1992,cheonwanless2005,wanless2022}.

These order-four results do not settle the full subgroup problem.  A
character of a proper subgroup, extended by zero to $\Sn_4$, need not be a
class function on $\Sn_4$; in particular, odd conjugation interchanges the
two non-real characters of $A_4$.  The distinction produces a substantially
larger finite problem: $\Sn_4$ has five irreducible characters, whereas all
subgroups of $\Sn_4$ contribute thirty-seven irreducible-character cases.
The theorem below closes this order-four gap.

For $n\geq1$, $H\leq\Sn_n$, a character $\chi:H\to\CC$, and
$A=(A_{ij})\in M_n(\CC)$, write
\begin{equation}\label{eq:def-dchi}
 d_\chi^H(A):=\sum_{\sigma\in H}\chi(\sigma)
   \prod_{i=1}^n A_{i,\sigma(i)},
 \qquad
 \per A:=\sum_{\sigma\in\Sn_n}\prod_{i=1}^n A_{i,\sigma(i)}.
\end{equation}
The normalization in the conjecture is $d_\chi^H/\chi(1)$; no complex
conjugate is placed on $\chi(\sigma)$ in~\eqref{eq:def-dchi}.

\begin{theorem}[Permanental dominance in order four]\label{thm:main}
Let $H\leq\Sn_4$, let $V$ be an irreducible finite-dimensional complex
representation of $H$, and let $\chi$ be its character.  For every
$A\in M_4(\CC)$ with $A\psd$,
\begin{equation}\label{eq:main}
  \frac{d_\chi^H(A)}{\dim V}\leq \per A.
\end{equation}
\end{theorem}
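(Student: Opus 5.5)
The plan is a finite case analysis over conjugacy classes of subgroups, reducing most cases to general inequalities and isolating the two non-real $A_4$ characters. First, if $\pi\in\Sn_4$ and $P$ is its permutation matrix, then $d_{\chi}^{H}(P^{T}AP)=d_{\chi^\pi}^{\pi H\pi^{-1}}(A)$, where $\chi^\pi$ is the conjugate character. Also $\per$ and positive semidefiniteness are invariant under this conjugation. So it suffices to treat one representative $H$ from each of the eleven conjugacy classes of subgroups of $\Sn_4$, together with every irreducible $\chi$ of that representative. This gives the thirty-seven cases.

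Second, I would dispose of the cases where $H$ fixes a point or preserves a block decomposition. For $H$ acting on a subset $S$ and trivially elsewhere, $d_\chi^H(A)=d_\chi^{H}(A[S])\prod_{i\notin S}A_{ii}$. The order-three result \cite{tabata2015} then gives $d_\chi^H(A[S])/\chi(1)\le\per A[S]$. Marcus's inequality $\per A[S]\prod_{i\notin S}A_{ii}\le\per A$ finishes these cases; it is the Lieb block inequality \cite{lieb1966} in the form $\per A\ge\per A[S]\per A[S^c]$.

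For subgroups such as $\langle(12),(34)\rangle$, $V_4$, $C_4$ and $D_4$, I would use $d_\chi^H(A)=\sum_{\sigma\in H}\chi(\sigma)\prod_i A_{i\sigma(i)}$ together with $|\chi(\sigma)|\le\chi(1)$. When $\chi$ is linear, each product $\prod_i A_{i\sigma(i)}$ over a product of transpositions is a product of $|A_{ij}|^2$ terms. The required inequality then becomes a lower bound of $\per A$ by a sum of principal-minor products, which can be proved by expanding $\per A$ as a sum of nonnegative Lieb-type terms via block contraction. The characters of $\Sn_4$ itself are immanants, which are known through order thirteen \cite{pate1998,wanless2022}. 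The two-dimensional character of $D_4$ and the real characters of $A_4$ and $C_3$-type subgroups should follow similarly, with moment bounds of the form $\Rea(A_{12}A_{23}A_{31})\le$ combinations of $A_{ii}|A_{jk}|^2$ in Gram form.

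The main obstacle is the pair of non-real linear characters of $A_4$, where $\chi$ takes the values $\omega,\bar\omega$ on the 3-cycles. Equality is attained by James's example, so no slack is available. My approach is to write $A=G^*G$ with Gram vectors $v_1,\dots,v_4$ and use the unitary freedom and homogeneity to normalize $v_4$. The difference $\per A-d_\chi^{A_4}(A)$ then becomes a real polynomial in the entries of a $3\times3$ positive-semidefinite Gram matrix obtained by contracting against $v_4$. The task is to prove nonnegativity of this polynomial on the PSD cone. I would first handle the rank-one slice, where an explicit sum-of-squares identity should exist. Next I would find an exact certificate at an interior positive-definite point, of the form SOS plus multipliers times principal minors, with rational Gram matrices computed by SDP and rounded. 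Closure of the Gram cone then passes the inequality to the boundary. The complex-conjugate character follows by transposing $A$, since $\bar A=A^T$ is also positive semidefinite.
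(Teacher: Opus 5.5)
Your overall architecture matches the paper's: transport to the eleven representatives, elementary disposal of thirty-five cases, reduction of the two non-real $A_4$ characters to a polynomial on the $3\times3$ Gram cone, then a rank-one slice, the interior, and closure. Several of your shortcuts are legitimate alternatives:
\begin{itemize}
\item Tabata's order-three theorem plus Lieb's $\per A\ge\per A[S]\per A[S^c]$ for the point-stabilizer subgroups;
\item the known $S_4$ immanant inequalities (the real $A_4$ characters are averages of these);
\item $d^{A_4}_{\bar\chi_\omega}(A)=d^{A_4}_{\chi_\omega}(A^T)$ with $A^T=\bar A\psd$, in place of an odd relabeling.
\end{itemize}

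\textbf{The main gap is the exceptional case, which is the entire difficulty.} You never produce the polynomial, and ``find an exact certificate \dots\ computed by SDP and rounded'' is a search plan, not a proof.
\begin{itemize}
\item Nothing guarantees that a certificate of the form SOS plus SOS multiples of principal minors exists, because the cone is unbounded.
\item The inequality is sharp, so the target polynomial has zeros on the boundary of the cone. In the paper's normalization, $\mathcal P(R)=0$ at the rank-one $R$ generated by $(\sqrt2,\sqrt2\,\omega,\sqrt2\,\omega^2)$. This corresponds to a rank-two correlation matrix with equality, all off-diagonal moduli $1/\sqrt3$.
\item Zeros of this kind are exactly where rounded numerical Gram matrices fail to give exact identities.
\item A certificate ``at an interior point'' says nothing about the whole open cone.
\end{itemize}

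The paper supplies two things you are missing.
\begin{itemize}
\item \emph{A specific normalization.} Take the Schur complement $B-qq^*$ and apply congruence by $\diag(\bar q_i^{-1})$. The gap then equals $|q_1q_2q_3|^2\mathcal P(R)$ with $R$ ranging over the \emph{whole} cone, since the unit-diagonal constraint disappears. An interpolation argument handles the case where some $q_i=0$.
\item \emph{A structured interior argument.} Write $h=G\zeta$ and $\mathcal P=Q(\zeta)+\lambda(c-\zeta^*G\zeta)$, with $\lambda>0$ and nonnegative Schur slack. Show that the Hessian $G\Theta$ is positive definite and that $\min Q=N/\Rea\det\Theta$. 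Then expand $a^3\Rea\det\Theta$ and $a^3N$ in the Gram defect $\delta=\det G$.
\end{itemize}
In that expansion, the constant term $N_0=a^3\lambda_0(P_0H_0-|L_0'|^2)$ is nonnegative precisely because of the rank-one sum of squares. This is how the rank-one slice is logically used, rather than being a separate case. The coefficients $N_1,N_2,N_3,D_1,D_2$ are certified by explicit square completions and an exact rational $LDL^T$ factorization. Without an argument of this kind, your third step remains a conjecture.

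\textbf{A smaller gap concerns $C_4$ and $D_8$.} Their four-cycle monomial $adf\bar c$ is not a product of $|A_{ij}|^2$ and carries a phase.
\begin{itemize}
\item For $C_4$, AM--GM reduces everything to $1+D\le\per A$. This still needs proof, though it does follow from the three $2+2$ Lieb bounds together with $|A_{ij}|\le1$ after correlation normalization.
\item For $D_8$, taking termwise absolute values would require $\per A\ge(1+|b|^2)(1+|e|^2)+(|af|+|cd|)^2$. This is stronger than needed, is not a principal-minor bound, and you do not prove it.
\item In particular, the linear character with $\chi(r)=\chi(s)=-1$ gives $R_\wedge=(1-|b|^2)(1-|e|^2)+|af-c\bar d|^2$, which Lieb's block inequality does not cover.
\end{itemize}
The paper handles $D_8$ with three phase-sensitive ingredients: the refined bound $R_+=(1+|b|^2)(1+|e|^2)+|af+c\bar d|^2$ with its exact certificate $\per A-R_+=Y+Z+2\Rea(\bar e h)$; the exterior Cauchy--Schwarz estimate $|af-c\bar d|^2\le(1-|b|^2)(1-|e|^2)$; and the two matching bounds.
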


Theorem~\ref{thm:main} concerns matrix order four only; it makes no assertion
in arbitrary order.

The proof has three deliberately separated layers.  Its conceptual step is
the Gram-cone reduction for the two non-real $A_4$ characters.  Its
completeness layer classifies all thirty-seven irreducible-character cases
and resolves the other thirty-five by general inequalities.  Its
verification layer consists of exact polynomial certificates, supplemented
by a Lean formalization.  The latter two layers certify the scope and the
algebra without obscuring the geometric mechanism.

The organizing idea is to write the exceptional gap as
\[
 c_0(u,v,w)+q^*K(u,v,w)q.
\]
The block positivity constraint says that $B(u,v,w)-qq^*$ is a Gram matrix.
After dividing out the nonzero coordinates of $q$, this residual becomes the
Gram matrix $R$ of three vectors, and the exceptional gap factors as
\[
 |q_1q_2q_3|^2\mathcal P(R).
\]
The problem is therefore moved from a character sum on four indices to the
nonnegativity of one explicit polynomial on the cone of $3\times3$ Gram
matrices.

Section~\ref{sec:statement-reductions} performs the analytic reductions and
proves that the finite ledger is complete.  Section~\ref{sec:elementary-cases}
settles the thirty-five cases governed by general inequalities.
Section~\ref{sec:a4-geometry} converts the exceptional character gap to the
Gram polynomial, and Section~\ref{sec:Ppositive} proves its nonnegativity and
completes Theorem~\ref{thm:main}.  Section~\ref{sec:matrix-consequence}
records an adjugate-matrix consequence.  \ref{app:character-data} contains
the complete case data, \ref{app:sos} contains the exact coefficient
certificates, and \ref{app:formal-verification} maps
the paper proof to its Lean counterparts.

\section{Reductions and the finite case ledger}\label{sec:statement-reductions}

\subsection{Reality and reduction to correlation matrices}

\begin{lemma}[Reality]\label{lem:reality}
If $A=A^*$ and $\chi$ is the character of a finite-dimensional complex
representation of $H$, then $d_\chi^H(A)$ and $\per A$ are real.
\end{lemma}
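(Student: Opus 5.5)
The plan is to show that complex conjugation fixes $d_\chi^H(A)$. The two facts I will use are that the entries of a Hermitian matrix satisfy $\overline{A_{ij}}=A_{ji}$, and that characters satisfy $\overline{\chi(\sigma)}=\chi(\sigma^{-1})$. The latter holds because $\chi(\sigma)$ is a sum of roots of unity: $\sigma$ has finite order, so $\rho(\sigma)$ is diagonalizable with eigenvalues that are roots of unity. Together these show that conjugation permutes the terms of the sum defining $d_\chi^H(A)$ in~\eqref{eq:def-dchi}.

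First I will conjugate a single term:
\[
\overline{\chi(\sigma)\prod_{i=1}^4 A_{i,\sigma(i)}}=\chi(\sigma^{-1})\prod_{i} A_{\sigma(i),i}.
\]
Next I will reindex the product by $j=\sigma(i)$, which gives $\prod_i A_{\sigma(i),i}=\prod_j A_{j,\sigma^{-1}(j)}$. The conjugated term is therefore exactly the term of $d_\chi^H(A)$ indexed by $\sigma^{-1}$. Since $H$ is a group, $\sigma\mapsto\sigma^{-1}$ is a bijection of $H$, so summing over $\sigma\in H$ yields $\overline{d_\chi^H(A)}=d_\chi^H(A)$.

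For the permanent I will run the same argument with $H=\Sn_n$ and $\chi\equiv1$, the trivial character, for which $d_\chi^H=\per$. Nothing in the argument uses $n=4$ or positive semidefiniteness; Hermiticity is the only hypothesis needed.

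I do not expect a real obstacle here. The only point needing care is the character identity $\overline{\chi(\sigma)}=\chi(\sigma^{-1})$, which requires the eigenvalues of $\rho(\sigma)$ to have modulus one. I will justify this by the finite order of $\sigma$ rather than by assuming $V$ is unitary. Alternatively, one could average an inner product over $H$ to make $\rho$ unitary.
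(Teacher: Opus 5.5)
Your proof is correct and is essentially the paper's argument: conjugate each term using $\overline{\chi(\sigma)}=\chi(\sigma^{-1})$ and $\overline{A_{ij}}=A_{ji}$, reindex the product so it becomes the $\sigma^{-1}$ term, and use that $\sigma\mapsto\sigma^{-1}$ is a bijection of $H$, with the permanent as the case $H=\Sn_n$, $\chi=1$. The only difference is how you justify the character identity: you use that the eigenvalues of a finite-order operator are roots of unity, whereas the paper unitarizes the representation, and this difference is immaterial.
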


\begin{proof}
Every finite-group representation may be made unitary, and hence
$\overline{\chi(\sigma)}=\chi(\sigma^{-1})$.  Hermitian symmetry gives
\[
 \overline{\prod_i A_{i,\sigma(i)}}
 =\prod_i A_{\sigma(i),i}
 =\prod_j A_{j,\sigma^{-1}(j)}.
\]
Conjugating the sum and replacing $\sigma$ by $\sigma^{-1}$ proves the first
claim.  The second is the special case $H=\Sn_n$, $\chi=1$.
\end{proof}

The normalization step is elementary but important because it removes all
diagonal parameters without an invertibility hypothesis.

\begin{lemma}[Diagonal congruence]\label{lem:scaling}
Let $s_1,\dots,s_n\in\RR$, and let
\[
 D=\diag(s_1,\dots,s_n).
\]
Then
\[
 d_\chi^H(DAD)=\left(\prod_i s_i\right)^2d_\chi^H(A),
 \qquad
 \per(DAD)=\left(\prod_i s_i\right)^2\per A.
\]
\end{lemma}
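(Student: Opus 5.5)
The plan is to compute the entries of $DAD$ directly and substitute them into the defining sums~\eqref{eq:def-dchi}. Since $D$ is diagonal with real entries, $(DAD)_{ij}=s_iA_{ij}s_j$. For any permutation $\sigma\in\Sn_n$ we therefore get
\[
 \prod_{i=1}^n (DAD)_{i,\sigma(i)}
 =\prod_{i=1}^n s_i\,A_{i,\sigma(i)}\,s_{\sigma(i)}
 =\Bigl(\prod_{i=1}^n s_i\Bigr)\Bigl(\prod_{i=1}^n s_{\sigma(i)}\Bigr)\prod_{i=1}^n A_{i,\sigma(i)} .
\]
The key observation is that $\sigma$ is a bijection of $\{1,\dots,n\}$. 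Hence $\prod_i s_{\sigma(i)}=\prod_j s_j$, and the scalar prefactor is $(\prod_i s_i)^2$ for every $\sigma$.

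Next I multiply by $\chi(\sigma)$ and sum over $\sigma\in H$. Because the prefactor does not depend on $\sigma$, it factors out of the sum, which gives $d_\chi^H(DAD)=(\prod_i s_i)^2 d_\chi^H(A)$. The permanent identity is the special case $H=\Sn_n$, $\chi\equiv 1$, exactly as in the proof of Lemma~\ref{lem:reality}.

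No step is a real obstacle; the reindexing of $\prod_i s_{\sigma(i)}$ is the only point needing care. The identity holds for arbitrary $A\in M_n(\CC)$ and any function $\chi$ on $H$, so it needs neither positivity nor invertibility of $D$. In particular, some $s_i$ may be zero.
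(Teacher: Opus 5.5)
Your proof is correct and matches the paper's argument: you expand $(DAD)_{i,\sigma(i)}=s_iA_{i,\sigma(i)}s_{\sigma(i)}$, use that $\sigma$ is a bijection to obtain the $\sigma$-independent factor $\left(\prod_i s_i\right)^2$, and sum over $H$. Your remarks are also accurate: the argument uses neither positivity, nor invertibility of $D$, nor any character property of $\chi$.
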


\begin{proof}
For every permutation $\sigma$,
\[
 \prod_i(DAD)_{i,\sigma(i)}
 =\prod_i(s_iA_{i,\sigma(i)}s_{\sigma(i)})
 =\left(\prod_i s_i\right)^2\prod_iA_{i,\sigma(i)}.
\]
Summation proves both identities.
\end{proof}

\begin{lemma}[Correlation reduction]\label{lem:correlation-reduction}
It is enough to prove Theorem~\ref{thm:main} for positive-semidefinite
matrices whose diagonal entries are all one.
\end{lemma}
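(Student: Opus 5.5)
The plan is to split on whether every diagonal entry of $A$ is positive, and in the degenerate case to show that both sides of \eqref{eq:main} vanish.

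\emph{Positive diagonal.}
Suppose first that $A\psd$ and $A_{ii}>0$ for every $i$.
Put $s_i=A_{ii}^{-1/2}$ and $D=\diag(s_1,\dots,s_n)$.
Then $C=DAD$ is positive semidefinite, since it is a congruence by a real diagonal matrix, and $C_{ii}=1$ for all $i$, so $C$ is a correlation matrix.
By Lemma~\ref{lem:scaling},
\[
 d_\chi^H(C)=\Bigl(\prod_i s_i\Bigr)^2 d_\chi^H(A),
 \qquad
 \per C=\Bigl(\prod_i s_i\Bigr)^2\per A .
\]
The factor $\bigl(\prod_i s_i\bigr)^2$ is positive.
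By Lemma~\ref{lem:reality}, both sides of \eqref{eq:main} are real, so the inequality makes sense.
Hence the inequality for $C$, divided by this positive factor, gives the inequality for $A$.

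\emph{Some diagonal entry is zero.}
Now suppose $A_{kk}=0$ for some $k$.
Positive semidefiniteness forces every $2\times2$ principal minor to be nonnegative.
In particular $A_{kk}A_{jj}-|A_{kj}|^2\geq0$, so $|A_{kj}|^2\le 0$ and hence $A_{kj}=0$ for every $j$.
Thus the whole $k$th row of $A$ vanishes.
Every product $\prod_i A_{i,\sigma(i)}$ contains the factor $A_{k,\sigma(k)}=0$.
Consequently $d_\chi^H(A)=0=\per A$, and \eqref{eq:main} holds with equality, namely $0\le 0$.

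The only point needing care is this zero-diagonal case.
It is what avoids assuming $A$ is invertible, and it is settled by the $2\times2$ minor argument.
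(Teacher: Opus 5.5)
Your proof is correct and follows the paper's argument. When the diagonal is positive you congruence-scale by $\diag(A_{ii}^{-1/2})$ and transfer the inequality via Lemma~\ref{lem:scaling}; when some $A_{kk}=0$ the $2\times2$ principal minors kill the $k$th row, so both sides vanish. The only cosmetic difference is direction: you scale $A$ down to $C$ and divide by the positive factor, whereas the paper writes $A=\diag(s_i)C\diag(s_i)$ and multiplies.
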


\begin{proof}
Suppose first that $A_{ii}>0$ for all $i$ and put
$s_i=\sqrt{A_{ii}}$.  Since a Hermitian matrix has real diagonal, the matrix
\[
 C=\diag(s_i^{-1})A\diag(s_i^{-1})
\]
is positive semidefinite and has diagonal one.  Moreover,
\[
 A=\diag(s_i)C\diag(s_i).
\]
Thus Lemma~\ref{lem:scaling} transfers the desired inequality from $C$ to
$A$.

If $A_{ii}=0$ for some $i$, positivity of the $2\times2$ principal minors
implies $A_{ij}=A_{ji}=0$ for every $j$.  Each permutation monomial then
contains a zero entry in row $i$, so both sides of~\eqref{eq:main} vanish.
\end{proof}

We therefore write a general $4\times4$ correlation matrix as
\begin{equation}\label{eq:corr-six}
 A(a,b,c,d,e,f)=
 \mat{
 1&a&b&c\\
 \bar a&1&d&e\\
 \bar b&\bar d&1&f\\
 \bar c&\bar e&\bar f&1
 }\psd.
\end{equation}

\subsection{Subgroups and characters}\label{sec:finite-reduction}

Table~\ref{tab:s4-subgroups} lists the eleven conjugacy types of subgroups of
$\Sn_4$.
Here the superscripts ${\rm t},{\rm d},{\rm n}$ denote transposition,
double transposition, and normal, respectively; in the dihedral row,
$r=(1234)$ and $s=(13)$.
The case column in Table~\ref{tab:s4-subgroups} sums to
\[
 1+2+2+3+4+4+4+3+5+4+5=37.
\]
Proposition~\ref{prop:elementary-cases} resolves thirty-five cases, and
Proposition~\ref{prop:a4-characters} resolves the two non-real $A_4$ cases.

\begin{lemma}[Subgroup classification]\label{lem:subgroup-classification}
Every subgroup of $\Sn_4$ is conjugate to exactly one of the eleven types in
Table~\ref{tab:s4-subgroups}.
\end{lemma}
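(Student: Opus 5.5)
The plan is to classify subgroups of $\Sn_4$ by order, using Lagrange's theorem: the order divides $24$, so it lies in $\{1,2,3,4,6,8,12,24\}$. For each order I will pin down the possible subgroups up to conjugacy. I will use two facts throughout. First, conjugacy in $\Sn_n$ preserves cycle type. Second, conjugation by $\Sn_4$ acts on the relevant generating data. The eleven types expected in Table~\ref{tab:s4-subgroups} are
\[
1,\ C_2^{\rm t},\ C_2^{\rm d},\ C_3,\ C_4,\ V_4^{\rm t},\ V_4^{\rm n},\ \Sn_3,\ D_4,\ A_4,\ \Sn_4 .
\]

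I will treat the orders in increasing order.

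\begin{itemize}[leftmargin=*]
\item \emph{Order $2$.} The subgroup is generated by an involution, which is either a transposition or a double transposition. The two cycle types give the two non-conjugate classes $C_2^{\rm t}$ and $C_2^{\rm d}$.
\item \emph{Orders $3$ and $4$, cyclic cases.} Elements of order $3$ are $3$-cycles and are all conjugate, giving one class $C_3$. Cyclic groups of order $4$ are generated by $4$-cycles, again all conjugate, giving one class $C_4$.
\item \emph{Order $4$, Klein case.} A noncyclic group of order $4$ consists of three commuting involutions with product relations. Either all three are double transpositions, which gives the normal $V_4^{\rm n}$. Otherwise it contains a transposition $(ij)$; the commuting involutions are then $(kl)$ and $(ij)(kl)$, which gives $V_4^{\rm t}$. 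These two are distinguished by cycle types.
\item \emph{Order $6$.} A subgroup of order $6$ contains a $3$-cycle, so it is $C_6$ or $\Sn_3$. There is no element of order $6$ in $\Sn_4$, so it is $\Sn_3$. It normalizes its $C_3=\langle(ijk)\rangle$. The normalizer of this $C_3$ is the point stabilizer of $l$, of order $6$, so $H$ equals that stabilizer. The four stabilizers are all conjugate.
\item \emph{Order $8$.} These are the Sylow $2$-subgroups, which are all conjugate by Sylow's theorem. One representative is $D_4=\langle r,s\rangle$.
\item \emph{Order $12$.} A subgroup of index $2$ is normal and contains all squares. Since $3$-cycles are squares of $3$-cycles, it contains all eight $3$-cycles. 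These generate $A_4$, so the subgroup is $A_4$.
\item \emph{Order $24$.} The only subgroup is $\Sn_4$ itself.
\end{itemize}

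Together with the trivial group, this gives eleven classes. They are pairwise non-conjugate because they differ in order, or in isomorphism type, or in the cycle types they contain. The isomorphism-type distinction separates $C_4$ from the Klein groups, and the cycle-type distinction separates $C_2^{\rm t}$ from $C_2^{\rm d}$ and $V_4^{\rm t}$ from $V_4^{\rm n}$.

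I expect the order-$4$ Klein case to be the only delicate step. It requires checking the centralizer computation and that the transposition case $\langle(12),(34)\rangle$ forms a single conjugacy class. Both follow quickly, because any two disjoint-pair partitions $\{\{i,j\},\{k,l\}\}$ are conjugate.
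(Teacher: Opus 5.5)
Your proof is correct, but it is organized differently from the paper's. The paper splits by transitivity. An intransitive subgroup lies in the Young subgroup of its orbit partition ($S_2$, $S_2\times S_2$, or the point stabilizer $S_3$), so its possible types are read off from the subgroups of those small groups. A transitive subgroup has order divisible by $4$ by orbit--stabilizer. Regularity then handles order $4$, Sylow handles order $8$, and orders $12$ and $24$ are simply named.

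You instead split by order via Lagrange and settle each order with a separate computation:
\begin{itemize}
\item cycle types of involutions for order $2$;
\item the centralizer of a transposition for the Klein case;
\item the normalizer of a Sylow $3$-subgroup for order $6$;
\item Sylow conjugacy for order $8$;
\item the index-two/squares argument for order $12$.
\end{itemize}

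The paper's route makes orbit structure, which the table also uses to separate types, appear directly, and it avoids centralizer and normalizer computations. Yours is more self-contained at the top end. In particular, you actually prove that $A_4$ is the only subgroup of order $12$, which the paper only asserts. Your route also makes the non-conjugacy check immediate from order, isomorphism type, and cycle type.

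One bookkeeping remark: your $V_4^{\rm t}=\langle(12),(34)\rangle$ is the table's $V_4^{\rm d}$, and your $D_4$ is the table's $D_8$. With that relabeling, your eleven classes match the table's representatives exactly.
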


\begin{proof}
An intransitive subgroup preserves its orbit partition.  For partition
$2+1+1$ it is contained in $S_2$ and gives $1$ or $C_2^{\rm t}$.  For
$2+2$ it is contained in $S_2\times S_2$ and gives, up to conjugacy,
$1,C_2^{\rm t},C_2^{\rm d}$, or $V_4^{\rm d}$.  For $3+1$ it is contained
in the displayed point stabilizer $S_3$ and gives
$1,C_2^{\rm t},C_3$, or $S_3$.  This accounts for every intransitive type.

If the action is transitive, orbit--stabilizer shows that $4$ divides
$|H|$, while Lagrange's theorem gives $|H|\mid24$.  A transitive group of
order four is regular and is either $C_4$ or the normal
double-transposition group $V_4^{\rm n}$.  A group of order eight is a Sylow
$2$-subgroup, hence is conjugate to
$\langle(1234),(13)\rangle\cong D_8$.  The cases of order twelve and
twenty-four are $A_4$ and $S_4$, respectively.  The listed types are
distinguished by order, orbit structure, and, for the two order-two and two
Klein-four types, by cycle type.
\end{proof}

\begin{table}[htbp]
\caption{Finite ledger of the irreducible-character cases for subgroups of $\Sn_4$.}
\label{tab:s4-subgroups}
\centering
\footnotesize
\begin{tabularx}{\textwidth}{@{}l>{\raggedright\arraybackslash}Xlrr>{\raggedright\arraybackslash}X@{}}
\toprule
type & representative & group & $|H|$ & cases & resolution\\
\midrule
$1$ & $\{1\}$ & $1$ & 1&1& principal permanent\\
$C_2^{\rm t}$ & $\langle(12)\rangle$ & $C_2$ &2&2& principal permanent\\
$C_2^{\rm d}$ & $\langle(12)(34)\rangle$ & $C_2$ &2&2& block contraction\\
$C_3$ & $\langle(123)\rangle$ & $C_3$ &3&3& scalar triangle bounds\\
$C_4$ & $\langle(1234)\rangle$ & $C_4$ &4&4& scalar and block bounds\\
$V_4^{\rm n}$ & $\{1,(12)(34),$\linebreak
$(13)(24),(14)(23)\}$ & $C_2^2$ &4&4& moment and scalar bounds\\
$V_4^{\rm d}$ & $\langle(12),(34)\rangle$ & $C_2^2$ &4&4& block contraction\\
$S_3$ & $\operatorname{Stab}(4)$ & $S_3$ &6&3& principal and scalar bounds\\
$D_8$ & $\langle r,s\rangle$ & $D_8$ &8&5& block contractions\\
$A_4$ & $A_4$ & $A_4$ &12&4& general bounds (2); Gram cone (2)\\
$S_4$ & $S_4$ & $S_4$ &24&5& moment and scalar bounds\\
\bottomrule
\end{tabularx}
\end{table}

The character data used below are recorded in
\ref{app:character-data}.  Orthogonality of the character-table rows
and the identity $\sum_\chi\chi(1)^2=|H|$ prove that no irreducible character
is missing.  For the two exceptional $A_4$ characters we fix
\begin{equation}\label{eq:omega}
 \omega=-\frac12-\frac{\sqrt3}{2}i,
 \qquad \omega^2+\omega+1=0.
\end{equation}

\subsection{Conjugation and finite reduction}

Simultaneous relabeling transports both the subgroup and its character data.

\begin{lemma}[Transport]\label{lem:transport}
Let $g\in\Sn_4$, $K=g^{-1}Hg$, and let
$\chi^g(g^{-1}\sigma g)=\chi(\sigma)$.  If
$A^g_{ij}=A_{g(i),g(j)}$, then
\[
 d_{\chi^g}^{K}(A^g)=d_\chi^H(A),\qquad \per A^g=\per A,
 \qquad A\psd\Longrightarrow A^g\psd.
\]
\end{lemma}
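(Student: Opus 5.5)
The plan is to verify the three claims directly from the definitions in~\eqref{eq:def-dchi}, reindexing the sum over $K$ by conjugation. Fix $g\in\Sn_4$ and write each $\tau\in K$ as $\tau=g^{-1}\sigma g$ with $\sigma\in H$; since conjugation by $g$ is a bijection $H\to K$, this reindexing is legitimate. By definition of $\chi^g$, the character weight is $\chi^g(\tau)=\chi(\sigma)$, so everything reduces to identifying the monomials.

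For the monomial I compute
\[
 \prod_{i=1}^4 A^g_{i,\tau(i)}
 =\prod_{i=1}^4 A_{g(i),\,g(\tau(i))}
 =\prod_{i=1}^4 A_{g(i),\,\sigma(g(i))},
\]
using $g\tau=\sigma g$. Substituting $j=g(i)$, which runs over $\{1,2,3,4\}$ exactly once, turns this into $\prod_j A_{j,\sigma(j)}$. Summing over $\sigma\in H$ with weights $\chi(\sigma)$ gives $d_{\chi^g}^K(A^g)=d_\chi^H(A)$. The permanent identity is the special case $H=K=\Sn_4$ and $\chi=1$, since conjugation permutes $\Sn_4$ onto itself.

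For positivity, I observe that $A^g=P^{*}AP$, where $P$ is the permutation matrix with $Pe_i=e_{g(i)}$, since $(P^*AP)_{ij}=e_i^*P^*APe_j=A_{g(i),g(j)}$. Congruence by a unitary matrix preserves the Hermitian property and positive semidefiniteness, because $x^*A^gx=(Px)^*A(Px)\ge0$. I would also remark that $\chi^g$ is the character of the conjugated representation $V\circ(\tau\mapsto g\tau g^{-1})$ of $K$, so it is irreducible of the same degree. Hence~\eqref{eq:main} for $(H,\chi)$ at $A$ is equivalent to~\eqref{eq:main} for $(K,\chi^g)$ at $A^g$.

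No step is a real obstacle. The only point needing care is composition order: one must check that $g\tau=\sigma g$ matches the convention $\chi^g(g^{-1}\sigma g)=\chi(\sigma)$ and $A^g_{ij}=A_{g(i),g(j)}$, rather than the inverse conjugation. With $\tau=g^{-1}\sigma g$ this gives $g(\tau(i))=\sigma(g(i))$, so the conventions are consistent.
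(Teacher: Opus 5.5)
Your proposal is correct and follows the paper's proof: the paper likewise reindexes each monomial via $j=g(i)$ to get $\prod_i A^g_{i,(g^{-1}\sigma g)(i)}=\prod_j A_{j,\sigma(j)}$, and sums. It also notes that simultaneous permutation of rows and columns preserves positivity and the permanent. Your explicit permutation-matrix congruence $A^g=P^*AP$ and the check of the composition convention simply make those one-line remarks precise.
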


\begin{proof}
Simultaneous permutation of rows and columns preserves positivity and the
permanent.  For a permutation monomial, changing $i$ to $g(i)$ gives
\[
 \prod_i A^g_{i,(g^{-1}\sigma g)(i)}
 =\prod_j A_{j,\sigma(j)}.
\]
Summing proves the assertion.
\end{proof}

Thus it suffices to prove dominance in the thirty-seven irreducible-character
cases associated with the eleven fixed representatives.

\section{General inequalities and the thirty-five cases}\label{sec:elementary-cases}

\subsection{Four cycle aggregates}

For~\eqref{eq:corr-six} define
\begin{align}
 T={}&|a|^2+|b|^2+|c|^2+|d|^2+|e|^2+|f|^2,\label{eq:T}\\
 D={}&|a|^2|f|^2+|b|^2|e|^2+|c|^2|d|^2,\label{eq:D}\\
 C={}&2\Rea(a d\bar b+a e\bar c+b f\bar c+d f\bar e),\label{eq:C}\\
 F={}&2\Rea(a d f\bar c+a e\bar f\bar b+b\bar d e\bar c).
 \label{eq:F}
\end{align}
Enumerating the cycle types of $\Sn_4$ gives
\begin{equation}\label{eq:per-aggregate}
 \per A=1+T+D+C+F.
\end{equation}
The five normalized $S_4$ generalized matrix functions, in the order of
the characters in~\eqref{eq:s4-table}, are
\begin{align}
 p_{[4]}&=1+T+D+C+F,\nonumber\\
 p_{[1^4]}&=1-T+D+C-F,\nonumber\\
 p_{[31]}&=1+\tfrac13T-\tfrac13D-\tfrac13F,\label{eq:s4-rows}\\
 p_{[211]}&=1-\tfrac13T-\tfrac13D+\tfrac13F,\nonumber\\
 p_{[22]}&=1+D-\tfrac12C.\nonumber
\end{align}

\begin{lemma}[Scalar correlation inequalities]\label{lem:scalar-toolbox}
If $A$ is the correlation matrix in~\eqref{eq:corr-six}, then
\begin{gather}
 0\leq T,\quad 0\leq D,\quad
 D\leq\frac{T^2}{4},\quad
 -2D\leq F\leq2D\leq T,
 \label{eq:scalar-basic}\\
 \frac{T^2}{2}\leq T+\frac32C,
 \qquad 1+D\leq\per A.\label{eq:moment-bound}
\end{gather}
In addition, if
\begin{equation}\label{eq:triangle-data}
 U=|a|^2+|b|^2+|d|^2,\qquad z=ad\bar b,
\end{equation}
then
\begin{equation}\label{eq:triangle-character-bounds}
 U+3\Rea z-\sqrt3\Ima z\geq0,
 \qquad U+3\Rea z+\sqrt3\Ima z\geq0.
\end{equation}
Every $3\times3$ principal permanent is at most $\per A$.
\end{lemma}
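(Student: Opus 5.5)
The plan is to prove the assertions of Lemma~\ref{lem:scalar-toolbox} one at a time. The elementary ones come from $|x|\leq1$ for the off-diagonal entries (a consequence of the $2\times2$ principal minors) together with AM--GM. The moment bound and the principal-permanent bounds use spectral or tensor information.

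\emph{Elementary bounds.} $T\geq0$ and $D\geq0$ are immediate. Put $x_1=|a|^2+|f|^2$, $x_2=|b|^2+|e|^2$, $x_3=|c|^2+|d|^2$. Then AM--GM gives
\[
D\leq\tfrac14\sum x_i^2\leq\tfrac14\Bigl(\sum x_i\Bigr)^2=\tfrac{T^2}{4}.
\]
For $|F|\leq2D$, bound each of the three summands of $F$ by AM--GM. For instance,
\[
2|adf\bar c|=2(|a||f|)(|c||d|)\leq|a|^2|f|^2+|c|^2|d|^2,
\]
and the three summands together use each product pair of $D$ exactly twice. Finally $2D\leq T$ follows from $|a|^2|f|^2\leq|a||f|\leq\tfrac12(|a|^2+|f|^2)$, using $|a|,|f|\leq1$.

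\emph{Moment bound.} Write $A=I+N$ and let $\lambda_1,\dots,\lambda_4$ be the eigenvalues of $N$. Positivity gives $\lambda_i\geq-1$, and $\sum\lambda_i=\operatorname{tr}N=0$. Counting closed walks gives $\sum\lambda_i^2=\operatorname{tr}N^2=2T$ and $\sum\lambda_i^3=\operatorname{tr}N^3=3C$. Each triangle contributes its oriented $3$-cycles, which is why $C$ carries the factor $2\Rea$. For every real $t$,
\[
0\leq\sum_i(1+\lambda_i)(\lambda_i-t)^2=\sum_i\lambda_i^2(1+\lambda_i)-2t\sum_i\lambda_i^2+4t^2 .
\]
Here $\sum(1+\lambda_i)=4$ and $\sum\lambda_i(1+\lambda_i)=\sum\lambda_i^2$. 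Taking $t=\tfrac14\sum\lambda_i^2$ gives $\bigl(\sum\lambda_i^2\bigr)^2/4\leq\sum\lambda_i^2+\sum\lambda_i^3$. In the trace quantities this reads $T^2\leq2T+3C$, which is $\tfrac{T^2}{2}\leq T+\tfrac32C$.

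\emph{Principal permanents and $1+D\leq\per A$.} Write $A$ as the Gram matrix of $v_1,\dots,v_4$ and set $x=v_1\otimes\cdots\otimes v_4$. Let $P_H=|H|^{-1}\sum_{h\in H}h$ be the averaging projector. Then the standard tensor identities are $\per A=24\norm{P_{\Sn_4}x}^2$ and $d_1^H(A)=|H|\,\norm{P_Hx}^2$. The claim about $3\times3$ principal permanents is Lieb's inequality $\per A\geq\per A_{11}\per A_{22}$ with a $1\times1$ block $A_{22}=(1)$. I will cite it \cite{lieb1966}, or reprove it from the tensor formula by splitting $\Sn_4$ into the cosets of $\Sn_3\times\Sn_1$.

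For $1+D$, observe that $1+D=d_1^{V_4^{\rm n}}(A)$. The identity contributes $1$, and $(12)(34)$, $(13)(24)$, $(14)(23)$ contribute $|a|^2|f|^2$, $|b|^2|e|^2$, $|c|^2|d|^2$. So the claim is that the permanent dominates the trivial-character function of the regular normal Klein group. I would use $\Sn_4=V_4^{\rm n}\rtimes\operatorname{Stab}(4)$ and $y=\sum_{h\in V_4^{\rm n}}hx$. Then $P_{\Sn_4}x=\tfrac14P_{\Sn_4}y$, and the required inequality becomes
\[
\tfrac32\norm{P_{\Sn_4}y}^2\geq\tfrac14\norm{y}^2 .
\]
I would try to get this by expanding $\norm{\sum_{\tau\in S_3}\tau y}^2$ and controlling the cross terms, using that the $S_3$-translates of $y$ have controlled inner products because $V_4^{\rm n}$ is normal.

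This is the step I expect to be the main obstacle. If the tensor route stalls, the fallback is the scalar identity $\per A-(1+D)=T+C+F$, where I would prove $T+C+F\geq0$ directly. The inputs would be the moment bound for $C$ and the bound $F\geq-2D$, followed by a case split on $T$; this already closes the case $T\leq2$. For large $T$ I would write $C$ as a sum of $3\times3$ principal permanents minus $4+2T$ and supplement it with an explicit sum-of-squares certificate.

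\emph{Triangle bounds.} Note that $3\Rea z\mp\sqrt3\Ima z=2\sqrt3\Rea(e^{\pm i\pi/6}z)$. So, writing $z=re^{i\theta}$, it suffices to show
\[
U\geq-2\sqrt3\,r\cos(\theta\pm\pi/6).
\]
Two constraints are available. AM--GM gives $U\geq3r^{2/3}$. The determinant of the $3\times3$ principal correlation block gives
\[
1-U+2r\cos\theta\geq0, \qquad\text{hence}\qquad r\cos\theta\geq\tfrac12(3r^{2/3}-1).
\]
For $r\leq(\sqrt3/2)^3$ the bound $3r^{2/3}\geq2\sqrt3r$ already suffices. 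For larger $r$, the determinant constraint forces $\cos\theta$ to be positive enough that $\cos(\theta\pm\pi/6)$ cannot approach $-1$. I would finish with a one-variable minimization over $\theta$ at fixed $r$, and verify the resulting inequality in $r$ by elementary calculus.
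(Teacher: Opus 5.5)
\textbf{Gap: the inequality $1+D\leq\per A$ is not proved.} The tensor route is only a restatement. Since $\per A=\tfrac32\norm{P_{\Sn_4}y}^2$ and $1+D=\tfrac14\norm{y}^2$, the target $\tfrac32\norm{P_{\Sn_4}y}^2\geq\tfrac14\norm{y}^2$ is literally $\per A\geq 1+D$. You give no mechanism for ``controlling the cross terms.'' Your fallback $\per A-(1+D)=T+C+F$ does close $T\leq2$, via $T+C+F\geq T+\tfrac{T^2-2T}{3}-2D\geq\tfrac{T(2-T)}{6}$. Note that this step also uses $D\leq T^2/4$, not only $F\geq-2D$. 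However, you leave $T>2$ to an unspecified sum-of-squares certificate, so as written that case is open.

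The missing step needs only bounds you have already proved. If $T\geq2$, your moment bound gives $3C\geq T^2-2T\geq0$. Your elementary bounds give $F\geq-2D\geq-T$. Hence $T+C+F\geq T+0-T=0$. This two-case argument is exactly the paper's proof.

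The other parts are sound. The elementary bounds and the moment bound agree with the paper. Your weighted-variance inequality $\sum_i(1+\lambda_i)(\lambda_i-t)^2\geq0$, at the optimal $t=\tfrac14\sum_i\lambda_i^2$, is the paper's Cauchy--Schwarz identity for the eigenvalues of $A$.

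For the triangle bounds you split on $r=|z|$ rather than on $U$, and the large-$r$ case needs no one-variable minimization. Since $r\leq1$, for $r>(\sqrt3/2)^3$ the determinant constraint gives $\cos\theta\geq(3r^{2/3}-1)/(2r)>5/8>1/2$. Then $|\theta|<\pi/3$, so $\cos(\theta\pm\pi/6)>0$ and the bound is immediate from $U\geq0$. The paper instead splits at $U=9/4$ and uses $x\geq(U-1)/2$ and $|y|\leq1$; the two arguments are equally elementary.

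For principal permanents, citing Lieb's block inequality with a $1\times1$ block is legitimate. The paper instead proves the bound directly from the symmetric-tensor identity $\norm{v}^2=2\bigl(\per A-\per A[\{1,2,3\}]\bigr)$, which keeps the lemma self-contained.
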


\begin{proof}
The inequalities $|a|^2,\ldots,|f|^2\leq1$ follow from the $2\times2$
principal minors.  The first four estimates in~\eqref{eq:scalar-basic} are
then AM--GM and
$2|\Rea(xy)|\leq|x|^2+|y|^2$, paired over opposite edges.

For the moment inequality, let $\lambda_1,\dots,\lambda_4\geq0$ be the
eigenvalues of $A$.  Direct trace expansion gives
\[
 \sum\lambda_i=4,\quad
 \sum\lambda_i^2=4+2T,\quad
 \sum\lambda_i^3=4+6T+3C.
\]
The exact identity
\[
 \left(\sum_i\lambda_i\right)\left(\sum_i\lambda_i^3\right)
 -\left(\sum_i\lambda_i^2\right)^2
 =\sum_{i<j}\lambda_i\lambda_j(\lambda_i-\lambda_j)^2
\]
implies the first inequality in~\eqref{eq:moment-bound}.  It also gives
$C\geq(T^2-2T)/3$.  To prove the second inequality,
subtract $1+D$ from~\eqref{eq:per-aggregate}.  If
$T\leq2$, then
\[
 T+C+F
 \geq T+\frac{T^2-2T}{3}-2D
 \geq \frac{T(2-T)}6\geq0.
\]
If $T\geq2$, the moment bound gives $C\geq0$, while
$2D\leq T$ and $F\geq-2D$ give the same
conclusion.  In particular, $1\leq\per A$.

We next prove~\eqref{eq:triangle-character-bounds}.  Put
$u=|a|^2$, $v=|b|^2$, $w=|d|^2$, $S=u+v+w$, and write $z=x+iy$.
Positivity of the principal $3\times3$ block gives
\[
 0\leq u,v,w\leq1,\qquad x^2+y^2=uvw,
 \qquad 1-S+2x\geq0.
\]
Also $27uvw\leq S^3$ by AM--GM.  If $S\leq9/4$, Cauchy--Schwarz gives
\[
 (3x\pm\sqrt3y)^2\leq12(x^2+y^2)
 \leq\frac49S^3\leq S^2,
\]
which proves both inequalities.  If $S\geq9/4$, then
$x\geq(S-1)/2$ and $|y|\leq1$, so
\[
 S+3x\pm\sqrt3y
 \geq\frac{5S-3}{2}-\sqrt3>0.
\]

It remains to prove the principal-permanent contraction.  Choose Gram
vectors $x_1,\ldots,x_4$ for $A$, write
$x\odot y=x\otimes y+y\otimes x$, and set
\[
 y_1=x_2\odot x_3,\qquad y_2=x_1\odot x_3,\qquad
 y_3=x_1\odot x_2,\qquad
 v=\bar c y_1+\bar e y_2+\bar f y_3.
\]
The identity
\[
 \ip{x\odot y}{z\odot w}
 =2\bigl(\ip{x}{z}\ip{y}{w}+\ip{x}{w}\ip{y}{z}\bigr)
\]
and direct collection of the cycle types give
\[
 \norm v^2=2\bigl(\per A-per A[\{1,2,3\}]\bigr).
\]
Hence the displayed principal permanent is at most $\per A$; simultaneous
relabeling proves the other three cases.
\end{proof}

We also need two symmetric $2+2$ contractions.  Put
\begin{align}
 R_+&:=1+|b|^2+|e|^2+D
       +2\Rea(a d f\bar c),\label{eq:Rplus}\\
 R_\wedge&:=(1-|b|^2)(1-|e|^2)+|af-c\bar d|^2.\label{eq:Rwedge}
\end{align}

\begin{lemma}[Block contractions]\label{lem:block-contractions}
If $A$ is the correlation matrix in~\eqref{eq:corr-six}, then
\begin{equation}\label{eq:block-bounds}
 R_+\leq\per A,\qquad R_\wedge\leq\per A,\qquad
 1+|a|^2|f|^2+|c|^2|d|^2\leq\per A.
\end{equation}
\end{lemma}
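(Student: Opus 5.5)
The plan is to recognize the first two quantities as $2\times2$ permanents of Gram matrices of order-two tensors built from the $2+2$ splitting $\{1,3\}\cup\{2,4\}$, and then to compare these with $\per A$ by a contraction identity, as in the proof of the principal-permanent bound in Lemma~\ref{lem:scalar-toolbox}. The third inequality needs no new work. Since $|b|^2|e|^2\geq0$, we have $1+|a|^2|f|^2+|c|^2|d|^2\leq1+D$, and $1+D\leq\per A$ is already part of~\eqref{eq:moment-bound}.

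For $R_+$, choose Gram vectors $x_1,\dots,x_4$ of $A$ and use the notation $x\odot y=x\otimes y+y\otimes x$ and the inner-product identity from the proof of Lemma~\ref{lem:scalar-toolbox}. Put $y_1=x_1\odot x_3$ and $y_2=x_2\odot x_4$. Then $\tfrac12\norm{y_1}^2=1+|b|^2$, $\tfrac12\norm{y_2}^2=1+|e|^2$, and $\tfrac12\ip{y_1}{y_2}$ is, up to conjugation, the permanent $af+c\bar d$ of the off-diagonal block $A[\{1,3\},\{2,4\}]$. Expanding the $2\times2$ permanent of $G_+=\bigl(\tfrac12\ip{y_i}{y_j}\bigr)$ gives $(1+|b|^2)(1+|e|^2)+|af+c\bar d|^2$, which is exactly $R_+$ in~\eqref{eq:Rplus}. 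Likewise, with $z_1=x_1\wedge x_3$ and $z_2=x_2\wedge x_4$, the Gram entries are $\det A[\{1,3\}]=1-|b|^2$, $\det A[\{2,4\}]=1-|e|^2$, and the off-diagonal entry is $\pm(af-c\bar d)$. The permanent of this Gram matrix is exactly $R_\wedge$. As a consistency check, $(R_++R_\wedge)/2=1+D$.

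It then remains to show $\per G\leq\per A$ for these two $2\times2$ Gram matrices. I would do this as in the principal case, by exhibiting $\per A-R_+$ and $\per A-R_\wedge$ as explicit squared norms. After removing $R_+$, the difference $\per A-R_+$ is $|a|^2+|c|^2+|d|^2+|f|^2$ plus the $3$-cycle terms $C$ plus the two remaining $4$-cycle terms of $F$. These are precisely the monomials coupling the two blocks. So I would look for a vector
\[
 v_+=\bar a\,(x_3\odot x_4)\otimes\cdots+\bar c\,(\cdots)+\bar d\,(\cdots)+\bar f\,(\cdots)
\]
in a suitable tensor space, with one summand for each cross edge $12,14,23,34$, built from the complementary pair of Gram vectors. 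I would then verify $\norm{v_+}^2=\kappa(\per A-R_+)$ for a constant $\kappa>0$ by expanding with the $\odot$ identity and collecting cycle types. For $R_\wedge$ I would try the same ansatz, using mixed symmetric/antisymmetric tensors in place of $\odot$.

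The main obstacle is finding the correct contraction vectors, especially for $R_\wedge$, where sign cancellations must reproduce $-|b|^2-|e|^2$ and $-2\Rea(adf\bar c)$ inside the difference. If a clean squared-norm identity resists, the fallback is conceptual. The map $w_1\otimes w_2\mapsto$ the symmetrization of $w_1\otimes w_2$ in $(\CC^k)^{\otimes4}$ is, up to normalization, a contraction from $\operatorname{Sym}^2$ of the relevant tensor space onto the space where $\per A$ is a squared norm of $x_1\otimes\cdots\otimes x_4$. Then $\per G=\norm{y_1\cdot y_2}^2$ in $\operatorname{Sym}^2$, and the map sends $y_1\cdot y_2$ to a multiple of the symmetrized $x_1\otimes\cdots\otimes x_4$. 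A Cauchy--Schwarz/projection argument would give $\per G\leq\per A$. The normalizations would be checked against the identity $(R_++R_\wedge)/2=1+D$ as a sanity test.
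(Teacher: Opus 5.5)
Your third inequality is correct and shorter than the paper's argument, which uses relabeled $R_+$ bounds and a case split: $1+|a|^2|f|^2+|c|^2|d|^2\le 1+D\le\per A$ by~\eqref{eq:moment-bound}. Your descriptions of $R_+$ and $R_\wedge$ as $2\times2$ Gram permanents are right, and so is $(R_++R_\wedge)/2=1+D$. But the comparison $\per G\le\per A$ is the entire content of the first two inequalities, and you do not prove it. Your $v_+$ is an ansatz with unspecified entries, and nothing concrete is proposed for $R_\wedge$.

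Your fallback cannot fill the gap, because contractivity runs the wrong way. Let $\iota$ denote the isometric embeddings of $\operatorname{Sym}^2(\operatorname{Sym}^2V)$ and $\operatorname{Sym}^4V$ into $V^{\otimes4}$ for the permanent inner products, and let $P$ be the orthogonal projection onto symmetric tensors. Counting the eight terms gives $P\iota\bigl((x_1x_3)(x_2x_4)\bigr)=3^{-1/2}\,\iota(x_1x_2x_3x_4)$. So $\norm{P\xi}\le\norm{\xi}$ yields $\per A\le3R_+$, the reverse of what you need; it is sharp at the all-ones matrix ($24$ versus $8$). This is structural. $R_+=d_1^{D_8}(A)$ is the trivial-character function of $D_8$, and soft projection only gives $\per A\le[\Sn_4:H]\,d_1^H(A)$. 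No linear map on $\operatorname{Sym}^4V$ can single out one pairing of the labels, and $d_1^H(A)\le\per A$ is itself an instance of the conjecture. For $R_\wedge$ it is worse: full symmetrization annihilates $(x_1\wedge x_3)\otimes(x_2\wedge x_4)$, so the map carries no information.

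The missing ingredient is an explicit certificate. The paper sets $r=(1-|b|^2)^{1/2}$, $y=(a+\bar db,\ \bar dr)$ and $g=(c+fb,\ fr)$. It then verifies $\per A-R_+=\norm{y}^2+\norm{g}^2+2\Rea(\bar e\,y^*g)$. This equals $\norm{y+\bar eg}^2+(1-|e|^2)\norm{g}^2\ge0$ and uses only $|b|,|e|\le1$.

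For $R_\wedge$ no squared-norm identity is used:
\begin{itemize}
\item Exterior Cauchy--Schwarz gives $s=|af-c\bar d|\le\bigl((1-|b|^2)(1-|e|^2)\bigr)^{1/2}\le1$.
\item Relabeling the $R_+$ bound gives $(1+|a||f|)^2\le(1+|a|^2)(1+|f|^2)\le\per A$, and likewise for $c,d$.
\item Since $s\le|a||f|+|c||d|$, it follows that $(1+s/2)^2\le\per A$.
\item Finally $R_\wedge\le1+s^2\le(1+s/2)^2$ because $0\le s\le1$.
\end{itemize}
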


\begin{proof}
First note the exact identity
\begin{equation}\label{eq:Rplus-square}
 R_+=(1+|b|^2)(1+|e|^2)+|af+c\bar d|^2.
\end{equation}
Put $r=(1-|b|^2)^{1/2}$ and
\[
 y_0=a+\bar d b,\qquad y_1=\bar d r,\qquad
 g_0=c+fb,\quad g_1=fr.
\]
If $Y=|y_0|^2+|y_1|^2$, $Z=|g_0|^2+|g_1|^2$, and
$h=\bar y_0g_0+\bar y_1g_1$, expanding the definitions gives the certificate
\begin{equation}\label{eq:Rplus-gap}
 \per A-R_+=Y+Z+2\Rea(\bar e h).
\end{equation}
Cauchy--Schwarz and $|e|\leq1$ imply
$|\Rea(\bar e h)|^2\leq YZ$.  Since
$Y+Z\geq2\sqrt{YZ}\geq2|\Rea(\bar e h)|$, the right-hand side
of~\eqref{eq:Rplus-gap} is nonnegative.  This proves the first contraction.

For the exterior contraction, take Gram vectors $x_1,\ldots,x_4$ for $A$.
Exterior Cauchy--Schwarz applied to $x_1\wedge x_3$ and
$x_2\wedge x_4$ gives
\[
 |af-c\bar d|^2\leq(1-|b|^2)(1-|e|^2),
\]
where the inner product of the two exterior tensors is $af-c\bar d$.

We need two consequences of the first contraction.  Relabeling its split
twice and using~\eqref{eq:Rplus-square} yields
$(1+|a|^2)(1+|f|^2)\leq\per A$ and
$(1+|c|^2)(1+|d|^2)\leq\per A$.  Put
\[
 x=|a|^2|f|^2,\qquad y=|c|^2|d|^2,\qquad P=\per A.
\]
If $x\leq y$, then $x\leq|c|^2$ and
$1+x+y\leq(1+|c|^2)(1+|d|^2)\leq P$; if $y\leq x$, use the other matching
bound.  This proves the third inequality in~\eqref{eq:block-bounds}.

Finally let $p=|b|^2$, $q=|e|^2$, and
$s=|af-c\bar d|$.  The exterior bound gives
$s^2\leq(1-p)(1-q)$, hence $0\leq s\leq1$.  The triangle inequality gives
$s\leq\sqrt{x}+\sqrt{y}$.  The two matching bounds and AM--GM imply
\[
 (1+\sqrt{x})^2\leq P,\qquad (1+\sqrt{y})^2\leq P.
\]
Consequently $(1+s/2)^2\leq P$.  Moreover
\[
 (1-p)(1-q)+s^2\leq(1+s/2)^2,
\]
because $p+q-pq\geq0$ and $s-3s^2/4\geq0$.  Therefore
$R_\wedge\leq P$, completing the proof.
\end{proof}

\subsection{Character cases covered by the general inequalities}

The following proposition records the finite analytic part of the proof.
\ref{app:character-data} records the normalized functions used
here.  The only cases not covered by the present proposition are the two
non-real $A_4$ characters.

\begin{proposition}[The thirty-five cases resolved by general inequalities]\label{prop:elementary-cases}
For every representative subgroup in Lemma~\ref{lem:subgroup-classification},
all normalized generalized matrix functions other than those associated with
$\chi_\omega$ and
$\bar\chi_\omega$ of $A_4$ are bounded above by $\per A$.
\end{proposition}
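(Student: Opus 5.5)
The proof will be a case-by-case verification over the ledger in Table~\ref{tab:s4-subgroups}. By Lemma~\ref{lem:transport} and Lemma~\ref{lem:correlation-reduction} we may fix the listed representatives and take $A$ to be the correlation matrix~\eqref{eq:corr-six}. For each of the thirty-five characters, the plan is to write the normalized function $d_\chi^H(A)/\chi(1)$ explicitly in the variables $a,\dots,f$, using the character data of \ref{app:character-data}. Each case is then matched to one of the bounds already established: Lemma~\ref{lem:scalar-toolbox} or Lemma~\ref{lem:block-contractions}. The guiding observation is that, for a small group, $d_\chi^H$ involves only the monomials of the few permutations in $H$. Each monomial is either a product of $|\cdot|^2$ terms, such as $|a|^2$ or $|a|^2|f|^2$, or a cycle product such as $ad\bar b$ or $adf\bar c$.

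\textbf{Small and intransitive groups.} For $H=1$ the function is $1\le 1+D\le\per A$ by~\eqref{eq:moment-bound}. For $C_2^{\rm t}=\langle(12)\rangle$ the function is $1\pm|a|^2\le 1+|a|^2$. This is a $2\times2$ principal permanent, hence at most a $3\times3$ principal permanent, hence at most $\per A$ by the last assertion of Lemma~\ref{lem:scalar-toolbox}.

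For $C_2^{\rm d}$ and $V_4^{\rm d}$ the functions have the form $1\pm|a|^2\pm|f|^2\pm|a|^2|f|^2$. The sign choices come from products of characters of the two factors. The largest is $(1+|a|^2)(1+|f|^2)$, which is bounded by relabeled $R_+$ via~\eqref{eq:Rplus-square}; the other sign patterns are dominated termwise by it.

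For $C_3$ and $S_3$ (the stabilizer of $4$) the functions are $3\times3$ generalized matrix functions. The trivial character gives the principal permanent. The two nontrivial $C_3$ characters give $1+2\Rea(\omega z)$. The sign and standard characters of $S_3$ give expressions like $1-U+2\Rea z$ and $1+\tfrac{U}{2}\cdot0-\Rea z$ (normalized). Each of these will be bounded by $\per A[\{1,2,3\}]=1+U+2\Rea z$, using~\eqref{eq:triangle-character-bounds}; the difference is exactly a combination $U+3\Rea z\pm\sqrt3\Ima z$ up to a positive factor. The principal-permanent contraction then finishes these cases.

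\textbf{Transitive groups.} For $V_4^{\rm n}$ the functions are $1\pm|a|^2|f|^2\pm|b|^2|e|^2\pm|c|^2|d|^2$ with at most two plus signs among the three products in nontrivial characters. The trivial one is $1+D\le\per A$, and the others follow by dropping negative terms and applying the third bound of~\eqref{eq:block-bounds} after relabeling.

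For $C_4$ the characters $\pm1,\pm i$ give $1\pm(\text{4-cycle terms})+(\text{double transposition term})$. I would bound these using the real and imaginary parts of the $4$-cycle product $adf\bar c$ and its conjugate, combined with $R_+$ or $R_\wedge$. The term $R_\wedge$ is natural for the sign-alternating cases, since it contains $-2\Rea(afc\,\bar d\cdots)$ after expansion.

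For $D_8$ the five characters mix $|b|^2,|e|^2$ (the reflections $(13),(24)$), the products $|a|^2|f|^2$ and $|c|^2|d|^2$, and one $4$-cycle real part. These should be bounded exactly by $R_+$, $R_\wedge$, or the third bound in~\eqref{eq:block-bounds}. The two-dimensional character gives $1-\tfrac12(\ldots)$ and should be handled by $1+D$.

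For $S_4$, each of the rows~\eqref{eq:s4-rows} is compared with $\per A=1+T+D+C+F$ using~\eqref{eq:scalar-basic} and the moment bound. For example, $\per A-p_{[31]}=\tfrac23T+\tfrac43D+C+\tfrac43F$; I would use $C\ge (T^2-2T)/3$ and $F\ge-2D$, splitting on whether $T\le2$ as in the proof of $1+D\le\per A$.

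For the trivial and sign characters and the three-dimensional character of $A_4$, the functions are $1+D+C$ and $1-\tfrac13D$ (or similar). These are handled by the same moment argument as in the $S_4$ case.

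\textbf{Main obstacle.} I expect the $C_4$ and $D_8$ cases to be the hardest. Their normalized functions contain a single oriented $4$-cycle term with a complex coefficient, and matching that term exactly to $2\Rea(adf\bar c)$ in $R_+$, or to the cross term in $R_\wedge$, requires choosing the right relabeling. The first thing I would try is to expand $R_\wedge$ and $R_+$ under each of the three pairings $\{12|34\},\{13|24\},\{14|23\}$. I would then express each $C_4$ and $D_8$ function as one of these minus nonnegative terms such as $|b|^2|e|^2$.
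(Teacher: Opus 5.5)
Your overall scheme is the paper's: fix representatives, expand each normalized function, and compare it with $P_3$, $1+D$, the matching bounds, $R_+$, $R_\wedge$, and the moment inequalities. Most cases go through as you describe. Your $D_8$ plan is exactly right: the five rows are $R_+$, $R_+-2|af+c\bar d|^2$, $R_\wedge-2|af-c\bar d|^2$, $R_\wedge$, and $1-|b|^2|e|^2$.

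The gap is in $C_4$. With $r=|b|^2|e|^2$ and $z=adf\bar c$, the four rows are $1+r+2\Rea z$, $1-r-2\Ima z$, $1+r-2\Rea z$ and $1-r+2\Ima z$. For $\chi((1234))=\pm i$ the double-transposition term enters with sign $-1$, not $+1$ as you wrote. These two rows also contain $\Ima z$, which is not a cross term of $R_+$ or $R_\wedge$ for any split, so exact matching is impossible there.

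For the row $\chi((1234))=-1$, your proposed tool fails outright. One has $R_\wedge-(1+r-2\Rea z)=|a|^2|f|^2+|c|^2|d|^2-|b|^2-|e|^2$, which equals $-\tfrac12$ at the positive-semidefinite point $a=c=d=f=0$, $b=e=\tfrac12$. The same-split $R_+$ fails as well: at $b=e=0$, $a=c=f=t$, $d=-t$ with $0<t<\tfrac12$, one gets $R_+=1<1+2t^4$. So writing each $C_4$ function as a relabeled $R_+$ or $R_\wedge$ minus nonnegative terms does not close this case.

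The missing idea is that the oriented $4$-cycle term need not be matched at all. Put $x=|a|^2|f|^2$ and $y=|c|^2|d|^2$. By AM--GM, $2|z|=2\,|a||f|\,|c||d|\le x+y$, so both $|2\Rea z|$ and $|2\Ima z|$ are at most $x+y$. Hence the two rows containing $+r$ are at most $1+r+x+y=1+D\le\per A$ by~\eqref{eq:moment-bound}. The two rows containing $-r$ are at most $1+x+y\le\per A$ by the third inequality in~\eqref{eq:block-bounds}. This is how the paper argues.

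Two smaller slips. First, $A_4$ has no sign character; its two real cases are the trivial character and $\chi_3$, giving $1+D+C$ and $1-\tfrac13D$. Second, your claim that $1+|a|^2$ is at most a $3\times3$ principal permanent needs a line. For instance, $2|\Rea(ad\bar b)|\le2|b||d|\le|b|^2+|d|^2$ because $|a|\le1$, or simply use $1+|a|^2\le(1+|a|^2)(1+|f|^2)\le\per A$.
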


\begin{proof}
First consider the cyclic, Klein-four, and $S_3$ cases.  In the $C_3$ and
$S_3$ cases put $z=ad\bar b$.  The expressions in
Table~\ref{tab:elementary-character-expansions} follow by listing the elements
of the corresponding subgroup.  We give the inequalities explicitly.  Set
\[
 U=|a|^2+|b|^2+|d|^2,\quad z=x+iy,\quad
 P_3=1+U+2x.
\]
Lemma~\ref{lem:scalar-toolbox} gives $P_3\leq\per A$ and
$U+3x\pm\sqrt3y\geq0$.  Thus
\[
 P_3-(1-x\pm\sqrt3y)=U+3x\mp\sqrt3y\geq0,
\]
which verifies the $C_3$ cases; the value $1+2x$ is at most $P_3$ because
$U\geq0$.  For $S_3$, the trivial case is $P_3$, the sign case differs from
$P_3$ by $-2U$, and the standard case satisfies
$P_3-(1-x)=U+3x\geq0$, obtained by averaging the two triangle bounds.

The trivial and $C_2^{\rm t}$ cases are bounded by $P_3$ (or by $1$ after
choosing the negative sign).  For $C_2^{\rm d}$, the positive value is at
most $1+|a|^2|f|^2+|c|^2|d|^2\leq\per A$, while the negative value is at
most one.  For $V_4^{\rm n}$, the all-positive value is
$1+D\leq\per A$ and each other sign pattern is no larger.  For
$V_4^{\rm d}$, all four values are no larger than
$(1+|a|^2)(1+|f|^2)\leq\per A$.

For $C_4$, put
\[
 r=|b|^2|e|^2,\qquad z=adf\bar c.
\]
Its four normalized functions are listed in~\eqref{eq:c4-rows}.
Put $x=|a|^2|f|^2$ and $y=|c|^2|d|^2$.  The estimates
\[
 2|\Rea z|,\ 2|\Ima z|
 \leq x+y
\]
show that the two values containing $+r$ are at most
$1+r+x+y=1+D\leq\per A$.  The other two are at most
$1-r+x+y\leq1+x+y\leq\per A$ by~\eqref{eq:block-bounds}.

For $D_8$, write $p=|b|^2$, $q=|e|^2$.  Its five normalized functions
are listed in~\eqref{eq:d8-rows}.
The first and fourth are~\eqref{eq:block-bounds}.  By~\eqref{eq:Rplus-square},
the second differs from $R_+$ only by replacing
$+|af+c\bar d|^2$ with $-|af+c\bar d|^2$, and hence is no larger.  The third
and fifth are at most $1$, hence at most $\per A$.

For $S_4$, the sign case follows from
$T+F\geq T-2D\geq0$.  For the two remaining
nontrivial estimates, direct subtraction gives
\begin{align*}
 3(\per A-p_{[31]})
   &=2T+4D+3C+4F
     \geq T^2-4D\geq0,\\
 2(\per A-p_{[22]})
   &=2T+3C+2F
     \geq T^2-4D\geq0.
\end{align*}
Here we used~\eqref{eq:scalar-basic}--\eqref{eq:moment-bound}.  Finally,
$p_{[211]}\leq1\leq\per A$ because
$F\leq T$ and $D\geq0$.

Finally, the trivial $A_4$ case is the average of the trivial and sign $S_4$
cases, while its three-dimensional case is the average of the $[31]$ and
$[211]$ cases.  This proves the proposition.
\end{proof}

\section{The exceptional \texorpdfstring{$A_4$}{A4} case: reduction to the Gram cone}
\label{sec:a4-geometry}

\subsection{The exceptional character gap}

It remains to treat the two conjugate non-real characters of $A_4$.  Their
common geometric content is isolated below through their
permanental-dominance gap.

Let
\begin{equation}\label{eq:Buvw}
 B(u,v,w)=\mat{1&u&v\\\bar u&1&w\\\bar v&\bar w&1}.
\end{equation}
Define
\begin{equation}\label{eq:c0}
 c_0(u,v,w)=|u|^2+|v|^2+|w|^2
 +2\Rea\bigl((1-\omega)uw\bar v\bigr)
\end{equation}
and the Hermitian matrix
\begin{equation}\label{eq:K}
 K(u,v,w)=
 \mat{
 1&(1-\omega^2)u+v\bar w&uw+(1-\omega)v\\
 (1-\omega)\bar u+w\bar v&1&(1-\omega^2)w+v\bar u\\
 \bar u\bar w+(1-\omega^2)\bar v&(1-\omega)\bar w+u\bar v&1
 }.
\end{equation}

\begin{lemma}[Character expansion]\label{lem:a4-expansion}
For the correlation matrix~\eqref{eq:corr-six},
\begin{equation}\label{eq:a4-gap}
 \per A-d_{\chi_\omega}^{A_4}(A)
 =c_0(a,b,d)+\mat{\bar c&\bar e&\bar f}
 K(a,b,d)\mat{c\\e\\f}.
\end{equation}
For $\bar\chi_\omega$, the same formula holds after an odd simultaneous
relabeling of the four indices.
\end{lemma}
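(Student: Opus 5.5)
The identity \eqref{eq:a4-gap} is a finite polynomial identity in $a,b,c,d,e,f$ and their conjugates. I would prove it by listing the twelve elements of $A_4$ by cycle type, evaluating $\chi_\omega$ on each, and subtracting the resulting monomial sum from $\per A$ as written in \eqref{eq:per-aggregate}. The data needed from \ref{app:character-data} are: $\chi_\omega$ is a linear character, trivial on $V_4^{\rm n}$, with value $\omega$ on one $A_4$-class of $3$-cycles and $\omega^2$ on the other. So $d_{\chi_\omega}^{A_4}(A)$ consists of the identity term $1$, the three double-transposition terms $|a|^2|f|^2+|b|^2|e|^2+|c|^2|d|^2=D$, and the eight $3$-cycle monomials. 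The latter come in four conjugate pairs, one pair for each triangle $\{1,2,3\}$, $\{1,2,4\}$, $\{1,3,4\}$, $\{2,3,4\}$, and each pair carries weights $\omega,\omega^2$ or $\omega^2,\omega$.

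The first key step is to fix, for each triangle, which orientation of the $3$-cycle lies in the class where $\chi_\omega=\omega$. Multiplying by a double transposition moves between triangles inside one $A_4$-class, so one normalization (say $(123)\mapsto\omega$) determines all four. The gap is then
\[
\per A-d_{\chi_\omega}^{A_4}(A)=T+\sum_{\triangle}\bigl((1-\omega)m_\triangle+(1-\omega^2)\bar m_\triangle\bigr)+F_{\rm all},
\]
where $m_\triangle$ is the oriented triangle monomial such as $ad\bar b$. Here $F_{\rm all}$ is the full sum of the six $4$-cycle monomials, which are absent from $A_4$ and so survive with coefficient one. The $D$ terms and the constant $1$ cancel exactly.

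The second step is to sort the gap by degree in $(c,e,f)$, which are the entries of row and column $4$.
\begin{itemize}
\item The degree-zero part is $|a|^2+|b|^2+|d|^2$ plus the triangle $\{1,2,3\}$ term. This should give $c_0(a,b,d)$, because $(1-\omega)ad\bar b+(1-\omega^2)\bar a\bar d b=2\Rea\bigl((1-\omega)ad\bar b\bigr)$ since $1-\omega^2=\overline{1-\omega}$. The orientation convention is what fixes $uw\bar v$ in \eqref{eq:c0}.
\item The part quadratic in $(c,e,f)$, of bidegree $(1,1)$ in $(c,e,f)$ and their conjugates, collects $|c|^2+|e|^2+|f|^2$ (the diagonal ones of $K$), the three triangle monomials through vertex $4$ (for example $a e\bar c$ with weight $1-\omega$ or $1-\omega^2$), and the six $4$-cycles (for example $adf\bar c$). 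Each of these monomials has the form $\bar x\,(\text{coefficient in }a,b,d)\,y$ with $x,y\in\{c,e,f\}$.
\end{itemize}
I would match these monomials one by one against the entries of $K(a,b,d)$. The $(1-\omega^{\pm1})$-linear entries such as $(1-\omega^2)u$ come from the triangles through $4$. The quadratic entries such as $v\bar w$ and $uw$ come from the $4$-cycles, with two $4$-cycles per off-diagonal conjugate pair.

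For the statement about $\bar\chi_\omega$, apply Lemma~\ref{lem:transport} with an odd $g$, for example $g=(12)$. Since $A_4$ is normal, $g^{-1}A_4g=A_4$. Conjugation by an odd permutation swaps the two classes of $3$-cycles, so $\chi_\omega^g=\bar\chi_\omega$. Then $d_{\bar\chi_\omega}^{A_4}(A^g)=d_{\chi_\omega}^{A_4}(A)$, which gives the formula for $\bar\chi_\omega$ after relabeling.

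The main obstacle is bookkeeping rather than ideas: getting the orientation of each triangle monomial consistent with the chosen $\omega$-class and with the placement of $1-\omega$ versus $1-\omega^2$ in $K$. To control this, I would first verify the degree-zero identity to pin down the convention. I would then check one triangle through vertex $4$ by direct computation, for example $(124)$ against the entry $(1-\omega^2)a$. The remaining entries of $K$ would follow by the same pattern.
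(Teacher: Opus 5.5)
Your proposal is correct and follows essentially the same route as the paper: list the twelve elements of $A_4$ with the normalization $\chi_\omega((123))=\omega$, so that $ad\bar b$, $e\bar d\bar f$, $bf\bar c$ and $c\bar a\bar e$ carry weight $\omega$. Then subtract from~\eqref{eq:per-aggregate} so that $1$ and $D$ cancel, collect by degree in $(c,e,f)$, and obtain $\bar\chi_\omega$ from Lemma~\ref{lem:transport} with an odd $g$. Your sample matchings check out against~\eqref{eq:K}: $(124)$ gives $(1-\omega^2)a$ in $K_{12}$, and the $4$-cycle monomial $b\bar d e\bar c$ gives $v\bar w$ there.
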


\begin{proof}
Choose the labeling of $C_+$ so that $\chi_\omega(C_+)=\omega$.  Listing the
twelve elements gives the expansion
\begin{align*}
d_{\chi_\omega}^{A_4}(A)
={}&1+|a|^2|f|^2+|b|^2|e|^2+|c|^2|d|^2\\
&+\omega\bigl(e\bar d\bar f+ad\bar b+bf\bar c+c\bar a\bar e\bigr)\\
&+\omega^2\bigl(df\bar e+ae\bar c+b\bar a\bar d+c\bar b\bar f\bigr).
\end{align*}
The four monomials on the last line are the conjugates of those on the
preceding line, in the inverse order, so the displayed quantity is real.
Subtracting it from~\eqref{eq:per-aggregate}, using
$\omega^2=-1-\omega$, and collecting the constant, diagonal, and off-diagonal
terms in $(c,e,f)$ yields~\eqref{eq:a4-gap}.  An odd permutation interchanges
the two three-cycle classes and therefore exchanges $\chi_\omega$ with its
conjugate.
\end{proof}

The remaining task is consequently the following four-vector inequality.

\begin{theorem}[Four-vector inequality]\label{thm:four-vector}
Suppose
\begin{equation}\label{eq:completion}
 \mathcal A=\mat{B(u,v,w)&q\\q^*&1}\psd,\qquad q\in\CC^3.
\end{equation}
Then
\begin{equation}\label{eq:four-vector}
 c_0(u,v,w)+q^*K(u,v,w)q\geq0.
\end{equation}
\end{theorem}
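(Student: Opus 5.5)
The plan is the Gram-cone reduction announced in the introduction. By the Schur complement, \eqref{eq:completion} is equivalent to $B(u,v,w)\psd$ together with $G:=B(u,v,w)-qq^*\psd$. So we may regard $B=G+qq^*$ and eliminate $u,v,w$. Write $G=(g_{ij})$ with $g_{ii}=1-|q_i|^2\geq0$. Then $u=g_{12}+q_1\bar q_2$, $v=g_{13}+q_1\bar q_3$, and $w=g_{23}+q_2\bar q_3$. Substituting into $c_0+q^*Kq$ gives a polynomial in the entries of $G$ and in $q,\bar q$. Since the statement is closed, we may first assume all $q_i\neq0$ and recover the general case by continuity. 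When some $q_i=0$, the left side of \eqref{eq:four-vector} reduces to an expression we also plan to handle directly. For instance, $q=0$ gives $c_0(u,v,w)\geq0$, which is exactly the $C_3$ triangle bound \eqref{eq:triangle-character-bounds}, up to conjugation conventions for $\omega$.

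With all $q_i\neq0$, rescale by $D=\diag(q_1,q_2,q_3)^{-1}$. Set $R=D^*GD$, meaning $r_{ij}=g_{ij}/(\bar q_iq_j)$, which is again a $3\times3$ positive-semidefinite Gram matrix. Its diagonal is $r_{ii}=(1-|q_i|^2)/|q_i|^2$, so the $|q_i|^2$ are recovered from $R$ via $|q_i|^2=1/(1+r_{ii})$. The off-diagonal entries then satisfy $u=\bar q_1^{-1}\ldots$-type relations; more precisely, $u=q_1\bar q_2(1+\overline{r_{12}}')$ for a suitable normalization. We also expect phases to cancel, since every monomial in the gap is phase-balanced: the gap is invariant under diagonal unitary congruence of $\mathcal A$, as in Lemma~\ref{lem:scaling} with unimodular entries. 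The expected outcome, as announced in the introduction, is a factorization
\[
 c_0+q^*Kq=|q_1q_2q_3|^2\,\mathcal P(R),
\]
with $\mathcal P$ an explicit polynomial in the diagonal and off-diagonal entries of $R$. Establishing this identity is a mechanical, if lengthy, symbolic expansion.

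The core step, and the main obstacle, is proving $\mathcal P(R)\geq0$ on the cone of $3\times3$ positive-semidefinite matrices. My first attempt will be to write $R$ as the Gram matrix of three vectors, $R=X^*X$ with $X=(x_1\,x_2\,x_3)$, and then seek a sum-of-squares certificate for $\mathcal P(X^*X)$ in the coordinates of $X$. Following the abstract, I would split into two cases. The rank-one case, $x_i$ scalars, should admit an explicit sum-of-squares identity. For the positive-definite interior, I would look for a certificate of the form $\mathcal P(R)=\sum_k\sigma_k(R)\,m_k(R)$, where the $m_k$ are principal minors or other manifestly nonnegative quantities on the cone, and the $\sigma_k$ are SOS with rational Gram matrices found numerically and then rounded to exact values. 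Nonnegativity on the full cone then follows by density of the positive-definite matrices and continuity.

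The risk is that a pure SOS in $X$ may fail, because $\mathcal P$ vanishes at James's equality matrix. The certificate must therefore be tight there, which forces the zero to be a common zero of all the squares. I would use that equality configuration to constrain the ansatz before solving the SDP.
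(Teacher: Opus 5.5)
You have reproduced the paper's reduction: the Schur complement $B-qq^*\psd$, a diagonal congruence producing the residual Gram matrix $R$, the factorization $c_0+q^*Kq=|q_1q_2q_3|^2\mathcal P(R)$, and continuity for vanishing $q_i$. Your remark that $q=0$ reduces to the $C_3$ triangle bound is also correct. Two small repairs are needed there.
First, to obtain $h_{12}=u/(q_1\bar q_2)-1$ the congruence must be by $\diag(\bar q_1^{-1},\bar q_2^{-1},\bar q_3^{-1})$ rather than $\diag(q_1,q_2,q_3)^{-1}$, unless you first use phase invariance to make $q_i>0$. Second, ``the statement is closed'' needs a density fact. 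The paper supplies it by convexly interpolating $\mathcal A$ with the rank-one correlation completion generated by $(p_1,p_2,p_3,1)$, where $p_i$ are the phases of the $q_i$.

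The genuine gap is the step you yourself call the main obstacle. You never prove $\mathcal P(R)\geq0$ on the Gram cone; you only describe an SDP search for a global SOS in Gram coordinates or a principal-minor-weighted SOS. Nothing guarantees that such a certificate exists at a usable degree. The cone is noncompact, and $\mathcal P$ genuinely vanishes on its boundary. At the rank-one point $(x,y,z)=\sqrt2(1,\omega,\omega^2)$ one has $R_0=\Phi=0$. This point corresponds to the rank-two correlation matrix with $u=w=-i/\sqrt3$, $v=i/\sqrt3$, $q_i=1/\sqrt3$, where both sides of the main inequality equal $8/3$. So Putinar- or Schm\"udgen-type existence results do not apply, and an unexecuted search is not a proof.

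The idea that closes this in the paper is to treat the third residual vector as the variable and minimize exactly. Let $G$ be the leading $2\times2$ block of $R$, $\delta=\det G$, $h=(h_{13},h_{23})^T$ and $c=n_3$. Then $\mathcal P=P_0+2\Rea(\ell^*h)+\lambda c+\Rea(h^*Mh)$ with $\lambda>0$ (Lemma~\ref{lem:lambda-positive}). Writing $h=G\zeta$ and using the Schur slack $c\geq\zeta^*G\zeta$ gives $\mathcal P\geq Q(\zeta)$. This is an unconstrained Hermitian quadratic in $\zeta\in\CC^2$ with Hessian $G\Theta$, where $\Theta=\lambda I+MG$.

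For $\delta>0$ one shows $G\Theta\pd$ and $\min Q=N/\Rea\det\Theta$. After substituting $b=(r^2+s^2+\delta)/a$, both $a^3\Rea\det\Theta$ and $a^3N$ become polynomials in $\delta$ whose coefficients need only be certified in the three real variables $(a,r,s)$.

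The constant coefficient $N_0=a^3\lambda_0(P_0H_0-|L_0'|^2)$ is nonnegative precisely because of the rank-one identity $\mathcal P=2(3R_0+|\Phi|^2)$. That identity makes the restriction $P_0+2\Rea(\overline{L_0'}z)+H_0|z|^2$ of $\mathcal P$ to rank-one completions nonnegative for every $z$. The coefficients $N_1,N_2,N_3$ then follow from explicit square completions and one rational Gram certificate, and singular $R$ are handled by $R+tI\to R$.

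Thus the rank-one case is not a separate branch, as in your plan, but the base coefficient of the interior minimum. This reduction of a nine-variable cone problem to low-dimensional certificates is what your proposal is missing.
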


The proof is developed in the residual normalization below and in
Section~\ref{sec:Ppositive}.

\subsection{Residual-vector normalization}\label{sec:normalization}

There is a useful geometric meaning behind the next change of variables.
Realize the block matrix~\eqref{eq:completion} as the Gram matrix of four unit
vectors.  The vector corresponding to the last row and column determines a
common one-dimensional direction.  Subtracting the projections of the first
three vectors onto that direction leaves three residual vectors, whose Gram
matrix is $B-qq^*$.  Dividing the residual vectors by the coordinates of the
removed projections does not change positivity; it merely produces the
scale-free matrix $R$ below.  This is the step that turns a four-vector
completion problem into a problem on the cone of three-vector Gram matrices.

Assume first that $q_1q_2q_3\neq0$.  The Schur complement of the final
diagonal entry in~\eqref{eq:completion} is
$B-qq^*\psd$.  Congruence by
$\diag(\bar q_1^{-1},\bar q_2^{-1},\bar q_3^{-1})$ gives the residual Gram
matrix
\begin{equation}\label{eq:residual-gram}
 R=\mat{
 n_1&h_{12}&h_{13}\\
 \bar h_{12}&n_2&h_{23}\\
 \bar h_{13}&\bar h_{23}&n_3
 }\psd,
\end{equation}
where
\begin{equation}\label{eq:residual-coordinates}
 n_i=\frac{1-|q_i|^2}{|q_i|^2},\qquad
 h_{12}=\frac{u}{q_1\bar q_2}-1,\quad
 h_{13}=\frac{v}{q_1\bar q_3}-1,\quad
 h_{23}=\frac{w}{q_2\bar q_3}-1.
\end{equation}
Thus $R$ is the Gram matrix of three residual vectors $w_1,w_2,w_3$ with
$n_i=\norm{w_i}^2$ and $h_{ij}=\ip{w_i}{w_j}$.

We now define the normalized geometric polynomial.  Put
\begin{align}
 E={}&n_1n_2+|h_{12}|^2+n_1n_3+|h_{13}|^2+n_2n_3+|h_{23}|^2,\nonumber\\
 x_{12,13}={}&n_1h_{23}+h_{13}\bar h_{12},\nonumber\\
 x_{12,23}={}&h_{12}h_{23}+n_2h_{13},\nonumber\\
 x_{13,23}={}&n_3h_{12}+h_{13}\bar h_{23},\nonumber\\
 S={}&E+2\Rea(x_{12,13}+x_{12,23}+x_{13,23}),\label{eq:tensor-terms}\\
 S_\omega={}&E+2\Rea(\omega^2x_{12,13}+\omega x_{12,23}
                     +\omega^2x_{13,23}),\nonumber\\
 \tau={}&h_{13}\bar h_{12}\bar h_{23},\nonumber\\
 \Gamma={}&n_3|h_{12}|^2+n_2|h_{13}|^2+n_1|h_{23}|^2
              +3\Rea\tau+\sqrt3\Ima\tau.\nonumber
\end{align}
Finally set
\begin{equation}\label{eq:geometric-P}
 \mathcal P(R)=24+6\left(n_1+n_2+n_3
 +2\Rea(h_{12}+h_{13}+h_{23})\right)+2S-S_\omega+\Gamma.
\end{equation}

\begin{lemma}[Exact normalization identity]\label{lem:normalization-identity}
With~\eqref{eq:residual-coordinates},
\begin{equation}\label{eq:normalization-identity}
 c_0(u,v,w)+q^*K(u,v,w)q
 =|q_1q_2q_3|^2\mathcal P(R).
\end{equation}
\end{lemma}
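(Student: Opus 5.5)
The identity is a polynomial identity in $(u,v,w,q)$ once we substitute the residual coordinates. The plan is to invert~\eqref{eq:residual-coordinates}, writing
\[
 u=q_1\bar q_2(1+h_{12}),\qquad v=q_1\bar q_3(1+h_{13}),\qquad w=q_2\bar q_3(1+h_{23}),\qquad 1=|q_i|^2(1+n_i),
\]
and then to expand the left side of~\eqref{eq:normalization-identity} term by term. Every monomial on the left should pick up exactly the factor $|q_1q_2q_3|^2$ after homogenization. The diagonal entries of $K$ are $1$, and the correlation normalization $1=|q_i|^2(1+n_i)$ lets us replace each bare constant by a degree-two expression in $q_i,\bar q_i$. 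The natural homogenizing device is to multiply constants by $\prod_i|q_i|^2(1+n_i)=1$ wherever needed, so that each term becomes $|q_1q_2q_3|^2$ times a polynomial in $n_i,h_{ij}$.

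I would organize the expansion by the structure already visible in $\mathcal P$.

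\textbf{Constant and quadratic pieces.} The terms $|u|^2,|v|^2,|w|^2$ in $c_0$ and the diagonal terms $|q_i|^2$ of $q^*Kq$ produce, after homogenization, the products $(1+n_i)(1+n_j)$ and $|1+h_{ij}|^2$. These account for $24$, for the linear part $6(\sum n_i+2\Rea\sum h_{ij})$, and for $E$.

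\textbf{Cubic pieces.} The triple product $2\Rea((1-\omega)uw\bar v)$ in $c_0$ gives $|q_1q_2q_3|^2(1+h_{12})(1+h_{23})\overline{(1+h_{13})}$. Its top-degree part is $\tau$-type, and it yields the $3\Rea\tau+\sqrt3\Ima\tau$ term of $\Gamma$, using $2\Rea((1-\omega)\bar\tau)=3\Rea\tau+\sqrt3\Ima\tau$ with~\eqref{eq:omega}. Its lower-degree parts feed $x_{\cdot,\cdot}$ and the linear terms.

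\textbf{Off-diagonal pieces.} The off-diagonal entries of $K$ have two kinds of terms. The terms $(1-\omega^2)u$ etc., paired with $\bar q_iq_j$, give terms weighted by $1-\omega$ or $1-\omega^2$. The terms $v\bar w$ etc. are triple products and give the $n_i h_{jk}$ and $h h\bar h$ combinations. Grouping these by the three tensor pairs $x_{12,13},x_{12,23},x_{13,23}$ should produce exactly $2S-S_\omega$. The weights $2-\omega^{\pm1}$ decompose as $2\cdot1-\omega^{\pm1}$, which is why $S$ and $S_\omega$ enter with coefficients $2$ and $-1$. The terms $n_3|h_{12}|^2$ etc. arise from $|u|^2$ multiplied by the homogenizing factor $(1+n_3)$.

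The key steps, in order, are:
\begin{enumerate}
\item Substitute and multiply each term by the appropriate $\prod(1+n_i)|q_i|^2$.
\item Factor out $|q_1q_2q_3|^2$.
\item Expand in the monomial basis of $n_i,h_{ij},\bar h_{ij}$.
\item Compare coefficients degree by degree with~\eqref{eq:geometric-P}, reducing $\omega^2=-1-\omega$ throughout.
\end{enumerate}

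The main obstacle is bookkeeping rather than ideas. The delicate part is the mixed degree-two and degree-three terms, where the phases $\omega$ versus $\omega^2$ must be assigned to the correct $x$ pair; note that $x_{12,23}$ carries $\omega$ while the other two carry $\omega^2$. I would first check the identity on the specialization $h_{ij}=0$, which gives $24+6\sum n_i+2E-E+\Gamma$ with $\tau=0$, and then on real $h$, before doing the general complex comparison. Ultimately I would certify it by a computer-algebra expansion, matching the paper's Lean verification layer.
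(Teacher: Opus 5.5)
Your plan is the paper's proof. You invert \eqref{eq:residual-coordinates} and pull out $Q=|q_1q_2q_3|^2$; the paper divides by $Q$, which is equivalent to your homogenization by $|q_k|^2(1+n_k)=1$. You then match degrees using $1+\omega+\omega^2=0$. Carrying out your steps 1--4 does prove the identity. However, this lemma consists of nothing but that bookkeeping, and several of your attributions of which terms produce which pieces are wrong.

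\textbf{Constant and linear parts.} The terms $|u|^2,|v|^2,|w|^2$ and the diagonal of $K$ do not account for the constant $24$ or the full linear part. They contribute only $6$ to the constant and $3\sum n_i+2\Rea\sum h_{ij}$ to the linear part. Besides $E$, they also produce the cross terms $2\Rea(n_3h_{12}+n_2h_{13}+n_1h_{23})$ and the cubic terms $n_k|h_{ij}|^2$. The remaining $18$ of the constant comes from two sources: $2\Rea(1-\omega)=3$ in the triple product of $c_0$, and $3+2+2+3+3+2=15$ from the six off-diagonal contributions of $K$.

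\textbf{Source of the $n_kh_{ij}$ and cubic terms.} You have the two kinds of off-diagonal terms of $K$ reversed. The terms $v\bar w$, $uw$, $v\bar u$ normalize to $(1+h_{13})(1+\bar h_{23})$, $(1+h_{12})(1+h_{23})$ and $(1+h_{13})(1+\bar h_{12})$. These contain no $n_i$ and have degree at most two. The $n_kh_{ij}$ terms instead come from the entries linear in $u,v,w$, for example $(1-\omega^2)(1+n_3)(1+h_{12})$, together with $(1+n_3)|1+h_{12}|^2$ from $c_0$. All of $\Gamma$ comes from $c_0$.

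\textbf{Coefficients of the $x$-pairs.} Each coefficient $2-\omega^{\pm1}$ of a monomial in an $x$-pair arises as $1+(1-\omega^{\pm1})$, with one summand from $c_0$ and one from $K$. This is a different decomposition from your ``$2\cdot1-\omega^{\pm1}$''.

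\textbf{Your $h_{ij}=0$ check.} At $h_{ij}=0$, all of $\Gamma$ vanishes, not just $\tau$, so the check reads $24+6\sum n_i+E$ on both sides.
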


\begin{proof}
Equations~\eqref{eq:residual-coordinates} give
\[
 u=q_1\bar q_2(1+h_{12}),\quad
 v=q_1\bar q_3(1+h_{13}),\quad
 w=q_2\bar q_3(1+h_{23}),
\]
and $|q_i|^2(1+n_i)=1$.  Let $Q=|q_1q_2q_3|^2$.  Dividing first by $Q$
organizes the cancellations by degree.  The two exact quotient expansions
are recorded in~\eqref{eq:c0-normalized}--\eqref{eq:K-normalized}.
They are obtained term by term;
for example,
\[
 \frac{|u|^2}{Q}=(1+n_3)|1+h_{12}|^2,\qquad
 \frac{uw\bar v}{Q}=(1+h_{12})(1+h_{23})(1+\bar h_{13}).
\]
Expanding the two quotient formulas and using
$1+\omega+\omega^2=0$ groups the degree-two terms as $2S-S_\omega$ and the
degree-three terms as $\Gamma$; the remaining terms are the constant and
linear part in~\eqref{eq:geometric-P}.  Thus their sum is
$\mathcal P(R)$, which proves~\eqref{eq:normalization-identity}.
\end{proof}

Thus Theorem~\ref{thm:four-vector} follows from the next result.

\begin{theorem}[Geometric polynomial]\label{thm:Ppositive}
For every Hermitian matrix $R$ of the form~\eqref{eq:residual-gram},
\[
 R\psd\quad\Longrightarrow\quad\mathcal P(R)\geq0.
\]
\end{theorem}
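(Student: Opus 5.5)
We plan to follow the strategy announced in the abstract: a rank-one sum-of-squares identity, an exact interior certificate, rational Gram certificates, and closure of the Gram cone. First observe that $\mathcal P$ is a real polynomial of degree three in the real and imaginary parts of the entries of $R$. Both the positive-semidefinite cone of $3\times3$ Hermitian matrices and the set $\{\mathcal P\geq0\}$ are closed. By closure of the Gram cone it therefore suffices to prove $\mathcal P(R)\geq0$ for $R\pd$, since every $R\psd$ is a limit of $R+\varepsilon I$ with $\varepsilon\downarrow0$. For $R\pd$ we parametrize $R=W^*W$, with $W$ having columns $w_1,w_2,w_3\in\CC^3$. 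The polynomial then becomes a real form in the coordinates of $W$, and it has degree at most six.

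The first step treats the rank-one boundary stratum $R=\bar x x^{T}$. In that case $n_i=|x_i|^2$ and $h_{ij}=\bar x_i x_j$, so $E$, $S$, $S_\omega$ and $\tau$ collapse to expressions in $|x_i|^2$ and the single phase product. We aim for an explicit sum-of-squares identity there: $\mathcal P$ should equal $|6+\alpha(x_1+x_2+x_3)+\cdots|^2$ plus other squares. This identity serves two purposes. It shows that the polynomial behaves correctly on extreme rays, and it identifies the expected equality locus, which should match James's matrix for $\chi_\omega$ after undoing the normalization of Lemma~\ref{lem:normalization-identity}.

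The second step handles the interior. We write $\mathcal P(W^*W)$ as a Hermitian quadratic form $m^*Gm$. Here $m$ is the vector of monomials of degree at most three in the entries of $W$; we would take the constant $1$, the entries $w_{ki}$, and the products $w_{ki}\bar w_{lj}w_{pr}$ restricted to the symmetry-adapted ones. We then look for a rational $G\psd$, first by numerical SDP and then by rounding and exact verification. We exploit the $\mathbb Z/3$ cyclic symmetry $1\to2\to3$ of $\mathcal P$, which is visible in the $\omega$-weights, together with the unitary invariance $W\mapsto UW$, to block-diagonalize $G$ and reduce its size. If a global certificate fails because of the equality locus, we add a multiplier: we certify $(\operatorname{tr}R+c)\mathcal P(R)$, or write $\mathcal P=\sigma_0+\sum\sigma_k\det(\text{minors})$ with SOS $\sigma_k$. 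The interior certificate then only needs strict positivity of the minors.

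The main obstacle is the second step. The equality case from James's matrix forces $G$ to be singular, which rules out a simple perturbation-and-rounding of a strictly feasible SDP solution. We would first compute the kernel of $G$ forced by the zeros of $\mathcal P$, restrict to its orthogonal complement, round there to rationals, and verify exactly. If this is still infeasible, we split $R\pd$ into cases by the sign of $3\Rea\tau+\sqrt3\Ima\tau$. When that term is nonnegative, $\Gamma\geq0$ trivially and only the degree-$\leq2$ part must be controlled; it is a quadratic form handled by a single Gram certificate. In the other case we bound $|\tau|\leq\sqrt{n_1n_2n_3}$ via $\det R\geq0$ and absorb $\Gamma$ by AM--GM against the quadratic part.
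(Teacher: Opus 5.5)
There is a genuine gap: the proposal describes a certificate search but never supplies the certificate, and its fallback step is false. The interior case rests entirely on finding a positive semidefinite Gram matrix for $\mathcal P(W^*W)$. That is a degree-six polynomial in eighteen real variables with zeros on the cone, and nothing you write shows such a representation exists; nonnegative polynomials need not be sums of squares, and the multiplier variants are equally unsupported.

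The case split cannot rescue this. In the region $3\Rea\tau+\sqrt3\Ima\tau\geq0$ you drop $\Gamma$, so you need the degree-$\leq2$ part to be nonnegative. Take $R=9\,xx^T$ with $x=(1,1,-\tfrac12)^T$, so $n=(9,9,\tfrac94)$, $h_{12}=9$ and $h_{13}=h_{23}=-\tfrac92$. Then $\tau=\tfrac{729}{4}>0$, $S=0$ and $S_\omega=\tfrac{729}{2}$, and the degree-$\leq2$ part equals $24+\tfrac{243}{2}-\tfrac{729}{2}=-219$. Only $\Gamma=\tfrac{2187}{2}$ makes $\mathcal P=\tfrac{1749}{2}$ positive, and $R+\varepsilon I$ turns this into a positive-definite counterexample to your first case.

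In the other case, the bound $|\tau|\leq\sqrt{n_1n_2n_3}$ is wrong by homogeneity. At the same $R$ one has $|\tau|=\tfrac{729}{4}$ while $\sqrt{n_1n_2n_3}=\tfrac{27}{2}$; the $2\times2$ minors only give $|\tau|\leq n_1n_2n_3$. Moreover, along rays $tR$ the cubic term $\Gamma$ dominates. Its negative part therefore cannot be absorbed by AM--GM into the quadratic part, which is itself indefinite on the cone.

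The missing idea is a way to eliminate the semidefinite constraint exactly, rather than certify a global polynomial inequality. The paper observes that $\mathcal P$ is affine in $n_3$ and quadratic in $h=(h_{13},h_{23})$. With $G$ the leading $2\times2$ block and $h=G\zeta$, it writes $\mathcal P=Q(\zeta)+\lambda(n_3-\zeta^*G\zeta)$ with $\lambda>0$. The Schur slack $\zeta^*G\zeta\leq n_3$ then reduces everything to the unconstrained minimum $N/\Rea\det\Theta$ of a two-variable Hermitian quadratic.

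The coefficients of $\det\Theta$ and $N$ in $\delta=\det G$ are certified by small square completions and a single $6\times6$ rational Gram matrix. The $\delta^0$ coefficient, the only one that can vanish, is reduced to the rank-one identity $\mathcal P=2(3R_0+|\Phi|^2)$. Even that identity is not a plain sum of squares, since $R_0$ contains $-(|xy|^2+|xz|^2+|yz|^2)$ and needs a discriminant argument. Closure then handles singular $R$, exactly as in your first step.
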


\section{Positivity on the Gram cone}\label{sec:Ppositive}

The proof uses the geometry of the Gram cone in three stages.  On the
rank-one locus the three residual vectors are collinear and the polynomial
has a direct sum-of-squares form.  In the positive-definite region, two
residual vectors are used as coordinates and the third is minimized through
an explicit two-dimensional Hermitian quadratic form.  Finally, every
singular Gram matrix is obtained as a limit of positive-definite ones.  The
rank-one calculation is also the algebraic base coefficient in the interior
minimum certificate, so it is both explanatory and logically active.

\subsection{The rank-one boundary}

Suppose first that $R$ has rank one.  Write
\[
 n_1=|x|^2,\quad n_2=|y|^2,\quad n_3=|z|^2,\qquad
 h_{12}=\bar xy,\quad h_{13}=\bar xz,\quad h_{23}=\bar yz.
\]
Define
\begin{align}
 R_0(x,y,z)={}&4+|x+y+z|^2+|xy+xz+yz|^2\nonumber\\
 &\quad-(|xy|^2+|xz|^2+|yz|^2)+|xyz|^2,\label{eq:R0}\\
 \Phi(x,y,z)={}&xy+\omega xz+\omega^2yz.
\end{align}

\begin{lemma}[Rank-one sum of squares]\label{lem:rank-one-sos}
For all $x,y,z\in\CC$,
\begin{equation}\label{eq:P-rank-one}
 \mathcal P(R)=2\bigl(3R_0(x,y,z)+|\Phi(x,y,z)|^2\bigr)\geq0.
\end{equation}
\end{lemma}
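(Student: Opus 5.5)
The identity \eqref{eq:P-rank-one} is a polynomial identity in $x,y,z,\bar x,\bar y,\bar z$. I will prove it by substituting the rank-one parametrization into \eqref{eq:geometric-P} and comparing the result with the right-hand side, degree by degree. Once the identity holds, nonnegativity is immediate from the form of the right-hand side. It is a sum of $|\Phi|^2\ge0$ and $6R_0$, and I must check that $R_0\ge0$. To do this I use
\[
|xy+xz+yz|^2-(|xy|^2+|xz|^2+|yz|^2)=2\Rea\bigl(xy\overline{xz}+xy\overline{yz}+xz\overline{yz}\bigr),
\]
which need not be nonnegative. So the positivity of $R_0$ is not termwise, and the sum-of-squares form has to be read off carefully.

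I will organize the identity by degree in the parameters. Under $h_{ij}=\bar x_ix_j$ and $n_i=|x_i|^2$, the constant term is $24=6\cdot 4$. The linear part is $6(n_1+n_2+n_3+2\Rea(h_{12}+h_{13}+h_{23}))=6|x+y+z|^2$, which matches $6|x+y+z|^2$ on the right.

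In degree two, $E$ becomes $|xy|^2+|xz|^2+|yz|^2$ counted twice. Each $x_{ij,kl}$ collapses; for instance $x_{12,13}=|x|^2\bar yz+\bar xz\,x\bar y=2|x|^2\bar yz$. Hence $S$ and $S_\omega$ become explicit Hermitian quadratic forms in the monomials $xy,xz,yz$. I will then check that $2S-S_\omega$ equals
\[
6\bigl(|xy+xz+yz|^2-|xy|^2-|xz|^2-|yz|^2\bigr)+2|\Phi|^2 .
\]
To do this I expand $|\Phi|^2$ with $\omega\bar\omega=1$, $\bar\omega=\omega^2$, and $1+\omega+\omega^2=0$, and match the diagonal and cross coefficients.

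In degree three, $\tau=\bar xz\,x\bar y\,y\bar z=|xyz|^2$ is real. Each term of $\Gamma$ equals $|xyz|^2$, so $\Gamma=3|xyz|^2+3|xyz|^2=6|xyz|^2$, matching $6|xyz|^2$.

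The main obstacle is the degree-two bookkeeping. I must get the conjugation conventions in $x_{12,23}$ and $x_{13,23}$ right (for example $h_{12}h_{23}=\bar x|y|^2z$), and assign $\omega$ versus $\omega^2$ in $S_\omega$ consistently with $\Phi$, so that the $|\Phi|^2$ term carries exactly the $\omega$-twisted cross terms. I will do this by tabulating the coefficient of each of $\bar x y|z|^2$, $\bar x z|y|^2$, and $\bar y z|x|^2$ and their conjugates on both sides.

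For nonnegativity I cannot simply drop terms, because of the sign issue above. My plan is to write
\[
|xy+xz+yz|^2-\textstyle\sum|xy|^2\ \ge\ -\tfrac12\sum|xy|^2\cdot\tfrac{?}{}
\]
by a direct AM--GM argument, but the cleaner route is a different one. I will view $3R_0+|\Phi|^2$ as a sum of $|\cdot|^2$ terms: the three twisted combinations $xy+\omega^kxz+\omega^{2k}yz$ for $k=0,1,2$ satisfy $\sum_k|\cdot|^2=3\sum|xy|^2$. This gives
\[
3|xy+xz+yz|^2-3\textstyle\sum|xy|^2+|\Phi|^2=2|xy+xz+yz|^2-|\bar\Phi'|^2
\]
for the remaining twist $\Phi'$. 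I will then absorb the residual negative term using $12+3|xyz|^2+3|x+y+z|^2$ together with $|\Phi'|\le$ an AM--GM bound. This last absorption is where I expect the real work to lie.
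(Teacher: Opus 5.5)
Your degree-by-degree check of the identity is correct, and it is the same direct expansion the paper uses. In rank one, $S=2|xy+xz+yz|^2$, $\Gamma=6|xyz|^2$, and $2S-S_\omega=6(|xy+xz+yz|^2-|xy|^2-|xz|^2-|yz|^2)+2|\Phi|^2$ all hold.

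The gap is the inequality, which you never prove. The display with the question-mark placeholder is abandoned, and the final ``absorption'' is only announced. Your twist identity is an exact rewriting, so it moves the difficulty rather than reducing it. What remains is
\[
 |\Phi'|^2\le 12+3|x+y+z|^2+2|xy+xz+yz|^2+3|xyz|^2,
 \qquad \Phi'=xy+\omega^2xz+\omega yz,
\]
and this is equivalent to the lemma's inequality itself. It is also sharp. At $(x,y,z)=(\rho,\rho\omega,\rho\omega^2)$ with $|\rho|^2=2$, both sides equal $36$; there $R_0=\Phi=0$, so $\mathcal P=0$.

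No bound on $|\Phi'|$ that uses only the sizes of the monomials can close this. The natural estimate $|\Phi'|^2\le3(|xy|^2+|xz|^2+|yz|^2)$ reduces the claim to $3R_0\ge|xy+xz+yz|^2$. That is false at $x=1$, $y=-1+i\beta$, $z=-1-2i/\beta$ with $\beta\neq0$, where $R_0=0$ but $xy+xz+yz=1$.

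The missing idea is to prove $R_0\ge0$ directly, as the paper does, by treating $R_0$ as a Hermitian quadratic in $z$:
\[
 R_0=K_0|z|^2+2\Rea(L_0z)+C_0,
\]
where $K_0=|1+\bar xy|^2$, $L_0=\overline{x+y}+\overline{xy}(x+y)$, and $C_0=4+|x+y|^2$. These satisfy the exact discriminant identity $C_0K_0-|L_0|^2=4(1+\Rea(\bar xy))^2\ge0$.

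If $K_0>0$, completing the square gives $R_0\ge0$. If $K_0=0$, the identity forces $L_0=0$, so $R_0=C_0>0$. Then $\mathcal P=6R_0+2|\Phi|^2\ge0$ follows immediately, and $\Phi'$ never needs to be estimated.
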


\begin{proof}
Regard $R_0$ as a quadratic in $z$.  Put
\[
 K_0=|1+\bar xy|^2,\quad
 L_0=\overline{x+y}+\overline{xy}(x+y),\quad
 C_0=4+|x+y|^2.
\]
Direct expansion gives
\[
 R_0=K_0|z|^2+2\Rea(L_0z)+C_0
\]
and the exact discriminant identity
\[
 C_0K_0-|L_0|^2=4(1+\Rea(\bar xy))^2\geq0.
\]
If $K_0>0$, completing the square proves $R_0\geq0$.  If $K_0=0$, the
discriminant identity forces $L_0=0$, and then $R_0=C_0>0$.  A second direct
expansion using $1+\omega+\omega^2=0$ gives the equality
in~\eqref{eq:P-rank-one}; its right-hand side is nonnegative.
\end{proof}

\subsection{The positive-definite region}

It remains to treat a positive-definite residual Gram matrix.  Write
\begin{equation}\label{eq:G-data}
 a=n_1>0,\quad b=n_2,\quad c=n_3,\quad
 h_{12}=g=r-is,\quad p=h_{13},\quad q=h_{23},
\end{equation}
and put
\begin{equation}\label{eq:Gram-G-delta}
 G=\mat{a&g\\\bar g&b},\qquad
 \delta=\det G=ab-r^2-s^2>0,\qquad h=\mat{p\\q}.
\end{equation}
There is a unique $\zeta\in\CC^2$ such that $h=G\zeta$.  Positivity of
$R$ tested on $(-\zeta,1)$ gives the Schur slack
\begin{equation}\label{eq:schur-slack}
 \zeta^*G\zeta\leq c.
\end{equation}

Set
\begin{align}
 \lambda={}&6+a+b+r^2+s^2+5r-\sqrt3s,\label{eq:lambda}\\
 P_0={}&24+6(a+b+2r)+ab+r^2+s^2,\label{eq:P0}\\
 \ell={}&\mat{
 6+(2-\omega)g+(2-\omega^2)b\\
 6+(2-\omega)a+(2-\omega^2)\bar g},\label{eq:ell}\\
 M={}&\mat{
 1+b&2-\omega+(1-\omega)g\\
 2-\omega^2+(1-\omega^2)\bar g&1+a},\label{eq:M}\\
 \Theta={}&\lambda I_2+MG.\label{eq:Theta-matrix}
\end{align}

\begin{lemma}[Quadratic decomposition]\label{lem:quadratic-decomposition}
With the preceding notation,
\begin{align}
 \mathcal P(R)
 ={}&P_0+2\Rea(\ell^*h)+\lambda c+\Rea(h^*Mh)\label{eq:P-quadratic}\\
 ={}&Q(\zeta)+\lambda(c-\zeta^*G\zeta),\nonumber
\end{align}
where
\begin{equation}\label{eq:Qzeta}
 Q(\zeta)=P_0+2\Rea((G\ell)^*\zeta)+\zeta^*(G\Theta)\zeta.
\end{equation}
Moreover $G\Theta$ is Hermitian.
\end{lemma}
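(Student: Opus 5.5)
The plan is to obtain the first line of~\eqref{eq:P-quadratic} by sorting the terms of $\mathcal P(R)$ in~\eqref{eq:geometric-P} by their degree in the variables $(p,q,c)$ that involve the third residual vector. Then we substitute $h=G\zeta$ and complete the square in $c$.

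First, we expand $E$, $S$, $S_\omega$ and $\Gamma$ from~\eqref{eq:tensor-terms} with $n_1=a$, $n_2=b$, $n_3=c$, $h_{12}=g$, $h_{13}=p$, $h_{23}=q$. We collect terms by their degree in $(p,q,\bar p,\bar q)$, treating $c$ separately.

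\begin{itemize}
\item \emph{Constant terms.} These come from $24$, from $6(a+b+2\Rea g)$, and from the contribution $ab+|g|^2$ of $E$ through $2S-S_\omega$, which has coefficient $2-1=1$. Their sum is $P_0$.
\item \emph{Terms linear in $c$.} The pieces $6c$, $(a+b)c$ (from $E$), the $x_{13,23}$ contributions $n_3h_{12}$ weighted by $2\Rea(2-\omega^2)$, and $c|g|^2$ from $\Gamma$ should give $\lambda c$. Here $2\Rea((2-\omega^2)g)=5r-\sqrt3 s$ with $g=r-is$, which is how the coefficients $5$ and $-\sqrt3$ in~\eqref{eq:lambda} arise. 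I will check this coefficient carefully.
\item \emph{Terms linear in $p,q$.} These come from $12\Rea(p+q)$, from $x_{12,23}$, from $x_{12,13}$, and from the $\Gamma$ terms $b|p|^2$ and $a|q|^2$ (which are quadratic, not linear). The linear part is matched against $2\Rea(\ell^*h)$ with $\ell$ as in~\eqref{eq:ell}.
\item \emph{Terms quadratic in $(p,q)$.} These are $|p|^2,|q|^2$ from $E$, the products $p\bar g\cdot(\cdot)$ and $gq$ from the $x$-terms, and $\Gamma$'s $b|p|^2+a|q|^2+3\Rea\tau+\sqrt3\Ima\tau$. Using $3\Rea\tau+\sqrt3\Ima\tau=2\Rea((1-\omega^2)\tau)$ together with $(1-\omega)=\overline{1-\omega^2}$, these are matched with $\Rea(h^*Mh)$.
\end{itemize}

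This is bookkeeping with the identities $1+\omega+\omega^2=0$ and $2-\omega=\overline{2-\omega^2}$. The only real care needed is in tracking which products are conjugated.

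Second, we substitute $h=G\zeta$, using $G^*=G$:
\begin{itemize}
\item $2\Rea(\ell^*G\zeta)=2\Rea((G\ell)^*\zeta)$;
\item $\Rea(h^*Mh)=\Rea(\zeta^*GMG\zeta)$;
\item $\lambda c=\lambda(c-\zeta^*G\zeta)+\lambda\zeta^*G\zeta$.
\end{itemize}
Collecting gives $Q(\zeta)+\lambda(c-\zeta^*G\zeta)$, provided that $\Rea(\zeta^*GMG\zeta)=\zeta^*GMG\zeta$ for all $\zeta$, i.e.\ that $GMG$ is Hermitian. In that case
\[
\zeta^*(\lambda G+GMG)\zeta=\zeta^*G\Theta\zeta .
\]
So the Hermitian claim for $G\Theta=\lambda G+GMG$ reduces to showing that $GMG$ is Hermitian, that is, $(GMG)^*=GM^*G=GMG$. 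This is equivalent to $G(M-M^*)G=0$, and since $G$ is invertible, to $M=M^*$.

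Third, we check $M=M^*$ directly from~\eqref{eq:M}. The diagonal entries $1+b$ and $1+a$ are real. For the off-diagonal entries, the conjugate of $2-\omega+(1-\omega)g$ is $2-\bar\omega+(1-\bar\omega)\bar g=2-\omega^2+(1-\omega^2)\bar g$, since $\bar\omega=\omega^2$. So $M$ is Hermitian, and $\Rea(h^*Mh)=h^*Mh$ automatically.

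The main obstacle is the first step. Verifying that the quadratic part of $\mathcal P$ in $(p,q)$ is exactly $h^*Mh$ requires the $\tau$ term and the cross terms $h_{13}\bar h_{12}$, $h_{12}h_{23}$ from $S$ and $S_\omega$ to combine into entries of the form $(2-\omega)+(1-\omega)g$. I would do this by computing the coefficients of $\bar p q$ and $\bar q p$ separately and then the $|p|^2,|q|^2$ coefficients ($1+b$ and $1+a$). This comparison would be cross-checked symbolically, as in the paper's certificate appendix.
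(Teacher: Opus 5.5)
Your proposal is correct and follows the paper's route. You substitute $a,b,c,g,p,q$ into $\mathcal P(R)$ and match terms by degree in $(p,q)$ and $c$, set $h=G\zeta$ using $G=G^*$, and get Hermiticity of $G\Theta=\lambda G+GMG$ from $M=M^*$; your check via $\bar\omega=\omega^2$ is exactly what is needed, and it also gives $\Rea(h^*Mh)=h^*Mh$.

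One bookkeeping slip to fix when you carry out the match. The terms $h_{13}\bar h_{12}=p\bar g$ and $h_{12}h_{23}=gq$ are linear in $h$ and belong to $\ell$. The off-diagonal entry $2-\omega+(1-\omega)g$ of $M$ instead comes from $h_{13}\bar h_{23}=p\bar q$ in $x_{13,23}$, with net weight $2\Rea((2-\omega^2)p\bar q)=2\Rea((2-\omega)\bar pq)$, together with $2\Rea((1-\omega^2)\tau)=2\Rea((1-\omega)g\,\bar pq)$.
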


\begin{proof}
Substitution of~\eqref{eq:G-data} in~\eqref{eq:geometric-P} gives the first
identity.  Replacing $h$ by $G\zeta$, using $G=G^*$ and
$\Theta=\lambda I+MG$, gives the second.  Finally
$G\Theta=\lambda G+GMG$ is Hermitian because $G$ and $M$ are Hermitian.
\end{proof}

\begin{lemma}[Positivity of the scalar part]\label{lem:lambda-positive}
Under $a,b\geq0$ and $r^2+s^2\leq ab$, one has $\lambda>0$.
\end{lemma}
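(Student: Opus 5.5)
We need $\lambda=6+a+b+r^2+s^2+5r-\sqrt3 s>0$ under the hypotheses $a,b\ge0$ and $r^2+s^2\le ab$.

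The plan is to bound the linear part $5r-\sqrt3 s$ from below by Cauchy--Schwarz and then complete a square. Set $\rho=\sqrt{r^2+s^2}$. Cauchy--Schwarz gives
\[
 5r-\sqrt3 s\geq -\sqrt{25+3}\,\rho=-2\sqrt7\,\rho.
\]
The hypothesis $r^2+s^2\le ab$ together with AM--GM gives $\rho\le\sqrt{ab}\le (a+b)/2$, so $a+b\ge 2\rho$. Hence
\[
 \lambda\geq 6+2\rho+\rho^2-2\sqrt7\,\rho
 =\rho^2-2(\sqrt7-1)\rho+6 .
\]

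This quadratic in $\rho$ has discriminant
\[
 4(\sqrt7-1)^2-24=4(8-2\sqrt7)-24=8-8\sqrt7<0,
\]
because $\sqrt7>1$. Its leading coefficient is positive, so it is strictly positive for every real $\rho$. Therefore $\lambda>0$. Equivalently, the minimum value is $6-(\sqrt7-1)^2=2\sqrt7-2>0$.

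The main point to verify is that the constant $6$, rather than the term $a+b$ alone, absorbs the linear coefficient $\sqrt{28}$. The estimate $a+b\ge 2\rho$ contributes only $2\rho$, and the numerical margin computed above shows this combination is enough. No case split is needed. Only the relation $a+b\ge 2\sqrt{ab}\ge 2\rho$ from the hypotheses is used, so the conclusion holds on the closed cone and in particular in the positive-definite region of \eqref{eq:G-data}.
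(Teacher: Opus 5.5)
Your proof is correct and is essentially the paper's argument. You use Cauchy--Schwarz to get $5r-\sqrt3 s\ge-2\sqrt7\rho$ and AM--GM to get $a+b\ge2\rho$, then show $\rho^2-2(\sqrt7-1)\rho+6=(\rho-(\sqrt7-1))^2+2(\sqrt7-1)>0$; your discriminant check is just an equivalent phrasing of the paper's completed square.
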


\begin{proof}
Let $\rho=\sqrt{r^2+s^2}$.  Cauchy--Schwarz and AM--GM give
\[
 5r-\sqrt3s\geq-2\sqrt7\rho,\qquad a+b\geq2\rho.
\]
Consequently
\[
 \lambda\geq(\rho-(\sqrt7-1))^2+2(\sqrt7-1)>0.
\]
\end{proof}

The remaining algebra is organized by the Gram defect $\delta$.  Define
\begin{equation}\label{eq:Ndef}
 N=\Rea\left(P_0\det\Theta-\ell^*G\adj(\Theta)\ell\right).
\end{equation}
After the substitution $b=(r^2+s^2+\delta)/a$, multiplication by $a^3$
clears the denominators in $\Rea\det\Theta$ and $N$.  Their coefficient
expansions are therefore polynomial identities in
$\mathbb Q(\sqrt3)[a,r,s,\delta]$.  The representation of $N_0$ below
temporarily introduces square roots only to expose its sign.

\begin{lemma}[Quadratic-form and minimum certificates]\label{lem:quadratic-certs}
For $a>0$ and $\delta\geq0$,
\begin{equation}\label{eq:detTheta-expansion}
 a^3\Rea\det\Theta=D_0+\delta D_1+\delta^2D_2,
\end{equation}
where $D_0,D_1,D_2>0$.  More precisely,
$D_0=a^3\lambda_0H_0$, where $\lambda_0$ denotes~\eqref{eq:lambda}
with $b=(r^2+s^2)/a$ and
$H_0=2\lambda_0-6+4(r^2+s^2)>0$.
Furthermore
\begin{equation}\label{eq:N-expansion}
 a^3N=N_0+\delta N_1+\delta^2N_2+\delta^3N_3,
\end{equation}
with $N_0\geq0$ and $N_1,N_2,N_3>0$.
\end{lemma}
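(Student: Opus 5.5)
The plan is to expand both quantities explicitly in the Gram defect $\delta$ after the substitution $b=(r^2+s^2+\delta)/a$, and then certify each coefficient separately.

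For~\eqref{eq:detTheta-expansion} I would start from the $2\times2$ identity
\[
 \det\Theta=\det(\lambda I_2+MG)=\lambda^2+\lambda\operatorname{tr}(MG)+\det M\,\det G
 =\lambda^2+\lambda\operatorname{tr}(MG)+\delta\det M .
\]
Here $\lambda$, $\operatorname{tr}(MG)$ and $\det M$ are each affine in $b$, hence affine in $\delta$ with denominators at most $a$. Terms with $1/a^2$ arise only in $\lambda^2$ and $\lambda\operatorname{tr}(MG)$, so $a^3\Rea\det\Theta$ is a polynomial in $a,r,s,\delta$ of degree at most two in $\delta$, with a factor of $a$ to spare. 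This gives the form $D_0+\delta D_1+\delta^2D_2$.

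At $\delta=0$ the matrix $G$ has rank one, so $MG$ has rank at most one and $\det\Theta=\lambda_0(\lambda_0+\operatorname{tr}(MG))$. I would compute $\Rea\operatorname{tr}(MG)$ directly. Its diagonal part is $(1+b)a+(1+a)b$ and its off-diagonal part is $2\Rea((2-\omega+(1-\omega)g)\bar g)$. With $b=(r^2+s^2)/a$, I expect to identify $\lambda_0+\Rea\operatorname{tr}(MG)$ with $H_0=2\lambda_0-6+4(r^2+s^2)$, which gives $D_0=a^3\lambda_0H_0$. The fact $H_0>0$ then follows from Lemma~\ref{lem:lambda-positive}, since $2\lambda_0-6\geq\lambda_0>0$ because $\lambda_0\geq6$ fails only when ... more simply because $2\lambda_0-6=\lambda_0+(\lambda_0-6)$ and the argument of that lemma bounds $\lambda_0-6+4\rho^2/2$ below by a positive square plus constant.

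For $D_1$ and $D_2$ I would collect the $\delta$- and $\delta^2$-coefficients. The $\delta^2$ term comes from $(\delta/a)^2$ in $\lambda^2$, together with the $b$-parts of $\lambda\operatorname{tr}(MG)$ and $\delta\det M$. This should make $D_2$ a positive multiple of $a$ plus nonnegative terms. For $D_1$ I would group terms as $\lambda_0\cdot(\text{positive})$ plus expressions of the type already handled in Lemma~\ref{lem:lambda-positive}: $a+b\geq2\rho$, and $5r-\sqrt3s\geq-2\sqrt7\rho$ after Cauchy--Schwarz.

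For~\eqref{eq:N-expansion} I would use $\adj\Theta=(\operatorname{tr}\Theta)I_2-\Theta$, so that $\ell^*G\adj(\Theta)\ell$ is explicit. A degree count in $\delta$ as above (with $\ell$ affine in $b$) shows that $a^3N$ is a cubic in $\delta$. The coefficients $N_1,N_2,N_3$ I would certify as polynomials in $a>0$ and $r,s$. I would reduce them to rational Gram (sum-of-squares) representations in variables such as $\sqrt a$, $r/\sqrt a$, $s/\sqrt a$, plus strictly positive leading monomials; these are the certificates I expect to find in~\ref{app:sos}.

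The main obstacle is $N_0\geq0$, which is the boundary case. Conceptually, at $\delta=0$ the quantity $N_0/(a^3\Rea\det\Theta)$ should be the minimum over $\zeta$ of $Q(\zeta)$ on the rank-one locus. By Lemma~\ref{lem:quadratic-decomposition} with $c=\zeta^*G\zeta$, that minimum is a value of $\mathcal P$ at a rank-one $R$, and Lemma~\ref{lem:rank-one-sos} makes this nonnegative. Concretely, I would parametrize $a=|x|^2$ and $g=\bar xy$ (this is where square roots enter). I would then substitute the minimizing $z$ and try to express $N_0$ as $D_0$ times $2(3R_0+|\Phi|^2)$ evaluated at the optimum. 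Alternatively, I would express it directly as the discriminant square $4(1+\Rea(\bar xy))^2$ multiplied by a positive factor plus squares. If that identification fails to close exactly, the fallback is a direct sum-of-squares certificate for $N_0$ in the variables $(\sqrt a, r/\sqrt a, s/\sqrt a)$.
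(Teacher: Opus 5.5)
Your structural steps agree with the paper. The identity $\det(\lambda I_2+MG)=\lambda^2+\lambda\operatorname{tr}(MG)+\delta\det M$, with $\operatorname{tr}(MG)=\lambda-6+2ab+2(r^2+s^2)$, does give $D_0=a^3\lambda_0H_0$ and $D_2=a(a+1)(a+2)$. Your $N_0$ argument is the paper's rank-one bridge.

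You can make your ``should be'' exact. Since $\adj$ is linear on $2\times2$ matrices,
\[
 G\adj(\Theta)=\lambda G+G\adj(G)\adj(M)=\lambda G+\delta\adj(M).
\]
This explains why $a^3N$ is only cubic in $\delta$; a naive count with four factors affine in $b$ gives degree four. At $\delta=0$, write $G=ww^*$ with $w^*=(x_0,y_0)$. Then $\ell^*G\adj(\Theta)\ell=\lambda_0|L_0'|^2$, so $N_0=a^3\lambda_0(P_0H_0-|L_0'|^2)$, and Lemma~\ref{lem:rank-one-sos} applied to $R_{\mathrm{rk1}}(z)$ finishes it.

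Your sketch of $H_0>0$ does not work as written. The bound $\lambda_0\ge6$ is false. Also, $\lambda_0-6+2\rho^2$ has no positive lower bound: for $a=b=\rho$ and $(r,s)$ antiparallel to $(5,-\sqrt3)$ it equals $3\rho^2-2(\sqrt7-1)\rho<0$ for small $\rho$. Instead, insert $\lambda_0\ge(\rho-k)^2+2k$, with $k=\sqrt7-1$, into $H_0=2\lambda_0-6+4\rho^2$. Using $k^2+2k=6$, this gives $H_0\ge6(\rho-k/3)^2+(2+4\sqrt7)/3>0$.

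The genuine gap is that the strict signs of $D_1$ and $N_1,N_2,N_3$ are only promised, not proved. Three of these are routine once written out:
\begin{itemize}
\item $N_3=(a+2)(a^2+6)$ is immediate.
\item $D_1$ and $N_2$ each have the form $Q(r^2+s^2)+L_rr+L_ss+C$ with $Q>0$, so you check $4QC-L_r^2-L_s^2>0$. For $D_1$ this quantity is $4a(72+126a+94a^2+26a^3)$.
\end{itemize}
For $D_1$ you need that explicit expansion: the $\delta$-coefficient contains $a^3\det M_0$, and $\det M_0=a+\rho^2/a-6-2\rho^2-9r-\sqrt3 s$ is indefinite. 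The estimates of Lemma~\ref{lem:lambda-positive} alone will not absorb it.

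$N_1$, however, is not handled by a generic SOS ansatz, because its positivity is a cross-term effect in $a$. With $X=r$, $Y=\sqrt3 s$, $\nu=X^2+Y^2/3$, one has $N_1=36\nu^2+aB_1+a^2(B_2+aB_3+a^2B_4)+6a^5$. The coefficient $B_3$ is indefinite; for example, $X=0$, $Y=3$ gives $B_3=-84$. The paper proceeds in three steps:
\begin{itemize}
\item It proves $B_1\ge0$, $B_2>0$, $B_4>0$ and $B_3+252>0$ by square completions.
\item It proves $\Sigma=4B_2B_4+252B_3>0$ through an exact rational $6\times6$ Gram matrix.
\item When $B_3<0$, the identity $4B_2B_4-B_3^2=\Sigma+(-B_3)(B_3+252)$ shows that $B_2+aB_3+a^2B_4$ has negative discriminant.
\end{itemize}
Until you supply this or an equivalent explicit certificate, the lemma's claim $N_1>0$ is unproven, and the interior minimum argument relies on it.
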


\begin{proof}
Substitution of $b=(r^2+s^2+\delta)/a$ in
\eqref{eq:Theta-matrix} and~\eqref{eq:Ndef} gives the two polynomial
expansions.  Their exact coefficients are recorded in
\ref{app:det-numerator}; here we isolate the geometric content of
their signs.

Put
$\rho=(r^2+s^2)^{1/2}$ and $k=\sqrt7-1$.  On the boundary $\delta=0$, the
two estimates in Lemma~\ref{lem:lambda-positive} give
\[
 \lambda_0\geq(\rho-k)^2+2k.
\]
Consequently
\begin{equation}\label{eq:H0-lower}
 H_0\geq6\rho^2-4k\rho+6
 =6\left(\rho-\frac{k}{3}\right)^2+\frac{2+4\sqrt7}{3}>0.
\end{equation}
This proves $D_0>0$.
The positive residual certificates for $D_1$ and $D_2$ are given in
\ref{app:det-numerator}.

It remains conceptually important to identify the constant coefficient in
\eqref{eq:N-expansion}.  Set $\delta=0$.  The first two residual vectors are
then collinear.  Define
\[
 x_0=\sqrt a,\qquad y_0=\frac{g}{\sqrt a},\qquad
 L_0'=x_0\ell_1+y_0\ell_2,
\]
where $\ell_1,\ell_2$ are the components of~\eqref{eq:ell}, evaluated at
$b=(r^2+s^2)/a$.  Then $|x_0|^2=a$, $|y_0|^2=b$, and
$\bar x_0y_0=g$.  For $z\in\CC$, let
\[
 R_{\mathrm{rk1}}(z)=
 \mat{
  a&g&\bar x_0z\\
  \bar g&b&\bar y_0z\\
  x_0\bar z&y_0\bar z&|z|^2
 }.
\]
This is the rank-one Gram matrix generated by $(x_0,y_0,z)$.  Substitution
in~\eqref{eq:P-quadratic} gives the exact scalar quadratic
\begin{equation}\label{eq:rank-one-N0-bridge}
 \mathcal P(R_{\mathrm{rk1}}(z))
 =P_0+2\Rea(\overline{L_0'}z)+H_0|z|^2.
\end{equation}
Indeed, the linear term is
$\ell^*\mat{\bar x_0z&\bar y_0z}^T=\overline{L_0'}z$, and the coefficient
of $|z|^2$ is $H_0=2\lambda_0-6+4(r^2+s^2)$.  By
Lemma~\ref{lem:rank-one-sos}, the left-hand side of
\eqref{eq:rank-one-N0-bridge} is nonnegative for every $z\in\CC$.
Since $H_0>0$, minimizing the right-hand side gives
$|L_0'|^2\leq P_0H_0$.  Substitution in~\eqref{eq:Ndef} now yields the exact
boundary coefficient
\begin{equation}\label{eq:N0-certificate}
 N_0=a^3\lambda_0(P_0H_0-|L_0'|^2)\geq0.
\end{equation}
The exact human-readable certificates for $N_1,N_2,N_3$ occupy the
remaining subsections of~\ref{app:sos}.  They
show respectively that each remaining coefficient is strictly positive,
without numerical approximation.
\end{proof}

\begin{proposition}[Positive-definite residual Gram matrices]
If $R\pd$, then $\mathcal P(R)>0$.
\end{proposition}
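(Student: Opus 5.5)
Since $R\pd$, the leading $2\times2$ block $G$ is positive definite, so $a>0$ and $\delta=\det G>0$, and the Schur slack~\eqref{eq:schur-slack} is strict: $c-\zeta^*G\zeta>0$ where $h=G\zeta$. By Lemma~\ref{lem:quadratic-decomposition},
\[
 \mathcal P(R)=Q(\zeta)+\lambda\,(c-\zeta^*G\zeta),
\]
and $\lambda>0$ by Lemma~\ref{lem:lambda-positive}. So the second term is strictly positive, and it suffices to show $Q(\zeta)\geq0$ for every $\zeta\in\CC^2$. This reduces the proposition to nonnegativity of an explicit Hermitian quadratic function of two complex variables, with $G$ fixed.

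For $Q$, I would show that its Hermitian quadratic part $G\Theta$ is positive definite and then minimize. Since $G\Theta$ is Hermitian and $G\pd$, I would write $G\Theta=G^{1/2}(G^{1/2}\Theta G^{-1/2})G^{1/2}$ and argue through $\Theta$. First, $\det(G\Theta)=\delta\det\Theta$ is real and equals $\delta\Rea\det\Theta$, which is positive by~\eqref{eq:detTheta-expansion} since $D_0,D_1,D_2>0$. Second, a diagonal entry of $G\Theta$ is positive: the $(1,1)$ entry is $\lambda a+(GMG)_{11}$, whose positivity I would check directly or deduce from the trace. Here $\operatorname{tr}(G\Theta)=\lambda\operatorname{tr}G+\operatorname{tr}(GMG)$, and I would use continuity in $\delta$ together with $\det>0$ on the connected domain to exclude a negative-definite branch. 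The connectedness argument is the cleanest route: the parameter set $\{a>0,\delta>0\}$ is connected, the determinant never vanishes on it, and at one sample point (for instance $g=0$, $a=b=1$) $G\Theta$ is visibly positive definite. Hence $G\Theta\pd$ throughout.

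With $G\Theta\pd$, the minimum of $Q$ is
\[
 P_0-(G\ell)^*(G\Theta)^{-1}(G\ell)=P_0-\ell^*G\,\Theta^{-1}\ell .
\]
Writing $\Theta^{-1}=\adj(\Theta)/\det\Theta$ and using that $\det\Theta$ is real here, this minimum equals $N/\det\Theta$ with $N$ as in~\eqref{eq:Ndef}. Taking the real part is harmless because the minimum is real. By~\eqref{eq:N-expansion}, $a^3N=N_0+\delta N_1+\delta^2N_2+\delta^3N_3>0$ for $\delta>0$, since $N_0\geq0$ and $N_1,N_2,N_3>0$. Also $a^3\det\Theta>0$. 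Therefore $\min Q>0$, so $Q(\zeta)>0$, and $\mathcal P(R)>0$.

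The main obstacle is the definiteness step: ensuring that $\det\Theta$ really is real (it equals $\det(G\Theta)/\delta$), and ruling out negative definiteness. I would settle the latter by the continuity argument above rather than by a separate entrywise estimate.
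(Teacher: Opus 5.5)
Your proof is correct and follows the paper's argument. It uses the same decomposition $\mathcal P(R)=Q(\zeta)+\lambda(c-\zeta^*G\zeta)$, the same fact $\det(G\Theta)=\delta\Rea\det\Theta>0$, and the same minimum value $N/\det\Theta$, controlled by the expansion of $a^3N$. The only difference is how you rule out the negative-definite branch of $G\Theta$: the paper uses the explicit identity $(G\Theta)_{11}=aH_0+\delta(a+1)>0$, while you use connectedness. Your argument is equally valid: the determinant does not vanish on the convex set $\{a>0,\delta>0\}$, and at the sample point $a=b=1$, $g=0$ the matrix $G\Theta=8I_2+M$ has eigenvalues $10\pm\sqrt7$.
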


\begin{proof}
Equations~\eqref{eq:detTheta-expansion} and~\eqref{eq:Dcoeffs} show
$\Rea\det\Theta>0$.  Direct multiplication gives the exact leading-entry
identity
\begin{equation}\label{eq:GTheta-leading}
 (G\Theta)_{11}=aH_0+\delta(a+1)>0.
\end{equation}
Moreover
\[
 \det(G\Theta)=\det G\det\Theta=\delta\det\Theta.
\]
Because $G\Theta$ is Hermitian, its determinant is real; since $\delta>0$,
this also proves $\det\Theta=\Rea\det\Theta$.  Hence
$\det(G\Theta)=\delta\Rea\det\Theta>0$.  The two leading principal minors
are positive, so the $2\times2$ Hermitian matrix $G\Theta$ is positive
definite.

The critical-point equation for~\eqref{eq:Qzeta} is
$G\Theta\zeta=-G\ell$.  Since $G$ is invertible, it is equivalent to
$\Theta\zeta=-\ell$, and therefore
\[
 \zeta_0=-\frac{\adj(\Theta)\ell}{\Rea\det\Theta},
\]
and completing the Hermitian square gives
\[
 \min_\zeta Q(\zeta)=\frac{N}{\Rea\det\Theta}.
\]
By~\eqref{eq:N-expansion}, $N>0$ when $\delta>0$.  Therefore
$Q(\zeta)>0$ for all $\zeta$.  Lemma~\ref{lem:lambda-positive} and the Schur
slack~\eqref{eq:schur-slack} now give
\[
 \mathcal P(R)=Q(\zeta)+\lambda(c-\zeta^*G\zeta)>0.
\]
\end{proof}

\subsection{Closure of the Gram cone}

\begin{proof}[Proof of Theorem~\ref{thm:Ppositive}]
For $R\psd$ and $t>0$, the matrix $R+tI$ is positive definite, and hence
$\mathcal P(R+tI)>0$ by the preceding proposition.  Since
$\mathcal P$ is a polynomial in the real and imaginary parts of the entries,
letting $t\downarrow0$ gives $\mathcal P(R)\geq0$.
\end{proof}

\begin{proof}[Proof of Theorem~\ref{thm:four-vector}]
If all coordinates of $q$ are nonzero, the residual matrix
\eqref{eq:residual-gram} is positive semidefinite.  Theorem~\ref{thm:Ppositive}
and~\eqref{eq:normalization-identity} give~\eqref{eq:four-vector}.

For arbitrary $q$, choose phases $p_i$ of modulus one, taking
$p_i=q_i/|q_i|$ when $q_i\neq0$ and $p_i=1$ otherwise.  Convexly interpolate
the block matrix~\eqref{eq:completion} with the rank-one correlation
completion generated by $(p_1,p_2,p_3,1)$.  Its new last column is
$q_i(t)=(1-t)q_i+tp_i$, which is nonzero for $0<t<1$.  The interpolated
matrix remains positive semidefinite, so the already proved case applies.
Letting $t\downarrow0$ and using continuity proves the result.
\end{proof}

\subsection{Completion of the main theorem}\label{sec:completion}

We now combine the geometric inequality with the finite character
classification.

\begin{proposition}[The two non-real $A_4$ characters]\label{prop:a4-characters}
For every $4\times4$ positive-semidefinite correlation matrix $A$,
\[
 d_{\chi_\omega}^{A_4}(A)\leq\per A,\qquad
 d_{\bar\chi_\omega}^{A_4}(A)\leq\per A.
\]
\end{proposition}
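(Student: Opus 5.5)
The plan is to read off the $\chi_\omega$ inequality from Lemma~\ref{lem:a4-expansion} together with Theorem~\ref{thm:four-vector}, and then get the $\bar\chi_\omega$ inequality by an odd relabeling via Lemma~\ref{lem:transport}.

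\emph{The $\chi_\omega$ case.} Let $A=A(a,b,c,d,e,f)$ be a correlation matrix as in~\eqref{eq:corr-six}. Its leading $3\times3$ principal block is exactly $B(a,b,d)$ from~\eqref{eq:Buvw}. The last column above the diagonal is $q=(c,e,f)^T$, and the corner entry is $1$. So $A$ is precisely the block matrix $\mathcal A$ of~\eqref{eq:completion} with $(u,v,w)=(a,b,d)$, and it is positive semidefinite by hypothesis. Theorem~\ref{thm:four-vector} then gives
\[
 c_0(a,b,d)+\mat{\bar c&\bar e&\bar f}K(a,b,d)\mat{c\\e\\f}\geq0.
\]
By~\eqref{eq:a4-gap} the left-hand side equals $\per A-d_{\chi_\omega}^{A_4}(A)$. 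This proves the first inequality. Both sides are real by Lemma~\ref{lem:reality}, so the comparison makes sense.

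\emph{The $\bar\chi_\omega$ case.} Fix an odd permutation $g$, say $g=(12)$. Since $A_4$ is normal in $\Sn_4$, we have $g^{-1}A_4g=A_4$. Conjugation by $g$ swaps the two classes of three-cycles, so the transported character $\chi_\omega^g$ of Lemma~\ref{lem:transport} is $\bar\chi_\omega$. Lemma~\ref{lem:transport} therefore gives
\[
 d_{\bar\chi_\omega}^{A_4}(A^g)=d_{\chi_\omega}^{A_4}(A),\qquad \per A^g=\per A.
\]
This equality runs from $A^g$ back to $A$, which is the wrong direction for the claim as stated. To bound $d_{\bar\chi_\omega}^{A_4}(A)$, apply the same identity to the matrix $A'=A^{g^{-1}}$. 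Then $(A')^g=A$, and $A'$ is again a positive-semidefinite correlation matrix. Combining this with the first case applied to $A'$ gives
\[
 d_{\bar\chi_\omega}^{A_4}(A)=d_{\chi_\omega}^{A_4}(A')\leq\per A'=\per A.
\]

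\emph{Where care is needed.} The only real check is the bookkeeping: the convention $\chi^g(g^{-1}\sigma g)=\chi(\sigma)$ must send $\chi_\omega$ to $\bar\chi_\omega$, and not merely to some character with the same values on the other class. This holds because the only characters of $A_4$ taking the value $\bar\omega$ on the class $C_+$ are conjugates of $\chi_\omega$ under an odd permutation.
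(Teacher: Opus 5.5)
Your proposal is correct and is essentially the paper's proof: you identify $A$ with the block matrix of Theorem~\ref{thm:four-vector} via $(u,v,w)=(a,b,d)$ and $q=(c,e,f)^T$, combine this with the expansion in Lemma~\ref{lem:a4-expansion}, and treat $\bar\chi_\omega$ by an odd simultaneous relabeling, where your use of $A'=A^{g^{-1}}$ makes explicit the direction of transport that the paper leaves implicit. Your final bookkeeping check is cleaner if stated directly: the transported character takes the values $1,1,\omega^2,\omega$ on $1,(12)(34),C_+,C_-$, which is exactly the $\bar\chi_\omega$ row of the table, so the somewhat circular appeal to ``conjugates under an odd permutation'' is unnecessary.
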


\begin{proof}
Write $A$ in the block form~\eqref{eq:completion} with
$B=B(a,b,d)$ and $q=(c,e,f)^T$.  Lemma~\ref{lem:a4-expansion} and
Theorem~\ref{thm:four-vector} give the first inequality.  An odd simultaneous
relabeling preserves positivity and the permanent and interchanges the two
non-real characters, giving the second.
\end{proof}

\begin{proof}[Proof of Theorem~\ref{thm:main}]
By Lemma~\ref{lem:correlation-reduction}, it is enough to consider
correlation matrices.  Lemma~\ref{lem:subgroup-classification} and the
transport Lemma~\ref{lem:transport} reduce an arbitrary subgroup and
irreducible representation to one of the listed character cases.
Proposition~\ref{prop:elementary-cases} proves thirty-five of them and
Proposition~\ref{prop:a4-characters} proves the remaining two.
Lemma~\ref{lem:reality} shows that the compared quantities are real.  Finally,
Lemma~\ref{lem:scaling} returns from correlation matrices to the full
positive-semidefinite cone.
\end{proof}

\section{An adjugate-matrix consequence}\label{sec:matrix-consequence}

The four-vector inequality has a useful matrix consequence.  It is recorded
after the proof of Theorem~\ref{thm:main} to make clear that it is a
consequence of the residual-vector argument, not an additional step in it.

\begin{lemma}[Ellipsoid-to-adjugate certificate]\label{lem:ellipsoid}
Let $B,K\in M_m(\CC)$ with $B\psd$ and $K=K^*$, and let $c\in\RR$.  If
\[
 \mat{B&q\\q^*&1}\psd\quad\Longrightarrow\quad c+q^*Kq\geq0
\]
for every $q\in\CC^m$, then
\begin{equation}\label{eq:adj-certificate}
 (\det B)K+c\,\adj(B)\psd.
\end{equation}
\end{lemma}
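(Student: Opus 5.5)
We reduce to the positive-definite case by perturbation and then read the conclusion off a Schur-complement description of the admissible set of $q$.

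\emph{Step 1: perturbation.} For $t>0$ put $B_t=B+tI\pd$. The hypothesis is stated for $B$, not $B_t$, so we first transfer it. If $\mat{B_t&q\\q^*&1}\psd$, then $B_t-qq^*\psd$. Consider
\[
q_s=(1+s)^{-1/2}q .
\]
The aim is to find $s\geq0$, depending on $t$ and tending to $0$ as $t\to0$, with $B-q_sq_s^*\psd$; the hypothesis then applies to $q_s$. We have
\[
B-q_sq_s^*=B-\frac{qq^*}{1+s}.
\]
Since $qq^*\preceq B+tI$, this dominates
\[
B-\frac{B+tI}{1+s}=\frac{sB-tI}{1+s}.
\]
That bound is not positive semidefinite when $B$ is singular, so the direct route fails.

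The cleaner route is to avoid perturbing the hypothesis and argue by continuity at the end. Both $\det B$ and $\adj B$ are continuous in $B$. So it suffices to prove \eqref{eq:adj-certificate} for $B\pd$ under the hypothesis for that same $B$. We would then handle singular $B$ separately, as in Step 3.

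\emph{Step 2: the case $B\pd$.} Here
\[
\mat{B&q\\q^*&1}\psd \iff q^*B^{-1}q\leq1,
\]
using the Schur complement of $B$. So the hypothesis says
\[
c+q^*Kq\geq0 \quad\text{on the ellipsoid } \{q: q^*B^{-1}q\leq1\}.
\]
Set $q=B^{1/2}x$ with $\norm{x}\leq1$. This gives
\[
c+x^*B^{1/2}KB^{1/2}x\geq0 \quad\text{for all unit } x .
\]
Equivalently, $cI+B^{1/2}KB^{1/2}\psd$. Conjugating by $B^{-1/2}$ gives $cB^{-1}+K\psd$. Multiplying by $\det B>0$ and using $\adj B=(\det B)B^{-1}$ yields \eqref{eq:adj-certificate}.

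Note that taking $x=0$ in the hypothesis gives $c\geq0$. This observation is used in Step 3.

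\emph{Step 3: singular $B$.} This is the main obstacle, since the hypothesis is tied to $B$ itself. We split by rank.

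If $\operatorname{rank}B\leq m-2$, then $\adj B=0$ and $\det B=0$, so the claim is trivial.

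If $\operatorname{rank}B=m-1$, then $\det B=0$ and $\adj B=\mu\,vv^*$. Here $v$ spans $\ker B$ and $\mu\geq0$ is the product of the nonzero eigenvalues. The claim reduces to $c\,\mu vv^*\psd$, which follows from $c\geq0$.

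So in the singular case only $c\geq0$ is needed. That fact comes from $q=0$, which is admissible because $\mat{B&0\\0&1}\psd$. This completes the plan, and no limiting argument is required after all.
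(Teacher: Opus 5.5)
Your proof is correct and follows the paper's argument. It makes the same split into two cases:
\begin{itemize}
\item For $\det B>0$, the Schur complement gives the ellipsoid $q^*B^{-1}q\leq1$, and homogeneity then gives $K+cB^{-1}\psd$. Your substitution $q=B^{1/2}x$ with $x$ a unit vector is equivalent to the paper's normalization $q=y/\sqrt{y^*B^{-1}y}$.
\item For $\det B=0$, it suffices that $c\geq0$, from $q=0$, and that $\adj(B)\psd$, from the rank analysis.
\end{itemize}
The perturbation detour in Step 1 is unnecessary and can be deleted, since the final case split uses no limiting argument.
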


\begin{proof}
If $\det B=0$, the choice $q=0$ gives $c\geq0$ and
$\adj(B)\psd$, so~\eqref{eq:adj-certificate} follows.  To justify the latter
fact, if $\operatorname{rank}B\leq m-2$, then $\adj(B)=0$; if
$\operatorname{rank}B=m-1$, spectral decomposition shows that $\adj(B)$ is
the product of the positive eigenvalues times the orthogonal projection onto
$\ker B$.  If $\det B>0$, then $B\pd$ and the block condition is
$q^*B^{-1}q\leq1$.  Given $y\neq0$, set
\[
 q=\frac{y}{\sqrt{y^*B^{-1}y}}.
\]
The hypothesis, multiplied by $y^*B^{-1}y$, says
$y^*(K+cB^{-1})y\geq0$.  Hence $K+cB^{-1}\psd$.  Multiplication by
$\det B>0$ and the identity $\adj(B)=(\det B)B^{-1}$ prove the claim.
\end{proof}

Applied to~\eqref{eq:c0}--\eqref{eq:K}, Lemma~\ref{lem:ellipsoid} gives the
positive-semidefinite certificate
\begin{equation}\label{eq:a4-certificate}
 (\det B(u,v,w))K(u,v,w)+c_0(u,v,w)\adj(B(u,v,w))\psd.
\end{equation}

\appendix

\section{Complete case data}\label{app:character-data}

\subsection{Character tables}

This appendix records the non-cyclic character data used in the finite
character reduction.  For $S_4$, use the conjugacy-class ordering
\[
 1,\quad(12),\quad(12)(34),\quad(123),\quad(1234).
\]
Its irreducible character table is
\begin{equation}\label{eq:s4-table}
\begin{array}{c|rrrrr}
 &1&(12)&(12)(34)&(123)&(1234)\\\hline
{[4]}    &1& 1& 1& 1& 1\\
{[1^4]}  &1&-1& 1& 1&-1\\
{[31]}   &3& 1&-1& 0&-1\\
{[211]}  &3&-1&-1& 0& 1\\
{[22]}   &2& 0& 2&-1& 0
\end{array}
\end{equation}

For $A_4$, let $C_+$ and $C_-$ be the two conjugacy classes of
three-cycles.  With $\omega$ as in~\eqref{eq:omega}, the table is
\begingroup
\renewcommand{\arraystretch}{1.15}
\begin{equation}\label{eq:a4-table}
\begin{array}{c|rrrr}
 &1&(12)(34)&C_+&C_-\\\hline
1&1&1&1&1\\
\chi_\omega&1&1&\omega&\omega^2\\
\bar\chi_\omega&1&1&\omega^2&\omega\\
\chi_3&3&-1&0&0
\end{array}
\end{equation}
\endgroup

For $D_8=\langle r,s:r^4=s^2=1,srs=r^{-1}\rangle$, the four linear
characters take independent signs on $r$ and $s$, while its two-dimensional
character is $2,-2,0,0,0$ on
\[
 1,\quad r^2,\quad\{r,r^3\},\quad\{s,r^2s\},\quad\{rs,r^3s\}.
\]
The standard character tables of $C_m$, $C_2^2$, and $S_3$ supply the
remaining cases listed in Section~\ref{sec:elementary-cases}.

\subsection{Normalized functions and case accounting}

For the smaller subgroups, listing the permutation monomials gives
Table~\ref{tab:elementary-character-expansions}.  In the $C_3$ and $S_3$
rows, $z=ad\bar b$.  In the $V_4^{\rm n}$ row,
$r=|a|^2|f|^2$, $s=|b|^2|e|^2$, and $t=|c|^2|d|^2$; the four signs are
the four character sign patterns of $C_2^2$.

\begin{table}[htbp]
\caption{Normalized generalized matrix functions for the smaller subgroup cases.}
\label{tab:elementary-character-expansions}
\centering
\small
\begin{tabularx}{\textwidth}{@{}l>{\raggedright\arraybackslash}X@{}}
\toprule
group & normalized generalized matrix functions\\
\midrule
$1$ & $1$\\
$C_2^{\rm t}$ & $1\pm|a|^2$\\
$C_2^{\rm d}$ & $1\pm|a|^2|f|^2$\\
$C_3$ & $1+2\Rea z$,\quad $1-\Rea z\pm\sqrt3\Ima z$\\
$V_4^{\rm n}$ & $1\pm r\pm s\pm t$ with the four character sign patterns\\
$V_4^{\rm d}$ & $(1\pm|a|^2)(1\pm|f|^2)$\\
$S_3$ & $1+|a|^2+|b|^2+|d|^2+2\Rea z$,\\
& $1-|a|^2-|b|^2-|d|^2+2\Rea z$,\quad $1-\Rea z$\\
\bottomrule
\end{tabularx}
\end{table}

For $C_4$, set $r=|b|^2|e|^2$ and $z=adf\bar c$.  The four normalized
functions are
\begin{equation}\label{eq:c4-rows}
 1+r+2\Rea z,\quad 1-r-2\Ima z,\quad
 1+r-2\Rea z,\quad 1-r+2\Ima z.
\end{equation}
For $D_8$, set $p=|b|^2$ and $q=|e|^2$.  Its five normalized functions are
\begin{align}
 R_+,&\quad
 (1+p)(1+q)-|af+c\bar d|^2,\nonumber\\
 &(1-p)(1-q)-|af-c\bar d|^2,\quad
 R_\wedge,\quad 1-pq.\label{eq:d8-rows}
\end{align}
The five $S_4$ functions are displayed in~\eqref{eq:s4-rows}.  For $A_4$,
the trivial and three-dimensional normalized functions are, respectively,
\begin{equation}\label{eq:a4-real-rows}
 1+D+C,\qquad 1-\frac13D,
\end{equation}
while the two non-real functions are the conjugate pair represented by the
expansion in Lemma~\ref{lem:a4-expansion}.  Thus the numbers of entries are
\[
 1+2+2+3+4+4+4+3+5+4+5=37,
\]
with thirty-five covered by Proposition~\ref{prop:elementary-cases} and the
remaining two by Proposition~\ref{prop:a4-characters}.

\section{Exact algebraic certificates}\label{app:sos}

This appendix records the longer exact expansions and the non-obvious
polynomial certificates used in the geometric proof.  They contain no
numerical approximation.

\subsection{Residual-normalization expansion}
\label{app:normalization-expansion}

With the notation of Section~\ref{sec:normalization} and
$Q=|q_1q_2q_3|^2$, direct substitution in~\eqref{eq:c0} gives
\begin{align}
\frac{c_0(u,v,w)}{Q}={}&
 (1+n_3)|1+h_{12}|^2+(1+n_2)|1+h_{13}|^2
 +(1+n_1)|1+h_{23}|^2\nonumber\\
&+2\Rea\bigl((1-\omega)(1+h_{12})(1+h_{23})
                   (1+\bar h_{13})\bigr).\label{eq:c0-normalized}
\end{align}
For the quadratic part, the diagonal terms and the three upper-triangular
entries of $K$ give
\begin{align}
\frac{q^*Kq}{Q}={}&
 (1+n_2)(1+n_3)+(1+n_1)(1+n_3)+(1+n_1)(1+n_2)\nonumber\\
&+2\Rea\Bigl(
 (1-\omega^2)(1+n_3)(1+h_{12})
 +(1+h_{13})(1+\bar h_{23})\nonumber\\
&\qquad +(1+h_{12})(1+h_{23})
 +(1-\omega)(1+n_2)(1+h_{13})\nonumber\\
&\qquad +(1-\omega^2)(1+n_1)(1+h_{23})
 +(1+h_{13})(1+\bar h_{12})\Bigr).
\label{eq:K-normalized}
\end{align}
Collecting these two formulas by degree gives
\eqref{eq:normalization-identity}.

\subsection{Determinant and numerator coefficients}\label{app:det-numerator}

After $b=(r^2+s^2+\delta)/a$, direct coefficient collection in
\eqref{eq:Theta-matrix} gives~\eqref{eq:detTheta-expansion} with
\begin{align}
 D_0&=a^3\lambda_0H_0,\nonumber\\
 D_1&=a\bigl((4+11a)(r^2+s^2)+a(20+a)r\nonumber\\
 &\qquad\quad-\sqrt3a(4+3a)s
          +a(18+10a+3a^2)\bigr),\label{eq:Dcoeffs}\\
 D_2&=a(a+1)(a+2).
\end{align}
The sign of $D_0$ was proved in the main text.  For $D_1$, set
\[
 Q=4+11a,\quad L_r=a(20+a),\quad
 L_s=-\sqrt3a(4+3a),\quad C=a(18+10a+3a^2).
\]
The exact identity
\begin{equation}\label{eq:two-variable-completion}
\begin{aligned}
 4Q\bigl(Q(r^2+s^2)+L_rr+L_ss+C\bigr)
  &=(2Qr+L_r)^2+(2Qs+L_s)^2\\
  &\quad+4QC-L_r^2-L_s^2
\end{aligned}
\end{equation}
reduces positivity to
\[
 4QC-L_r^2-L_s^2
 =4a(72+126a+94a^2+26a^3)>0.
\]
Thus $D_1>0$, and $D_2>0$ is immediate.

Coefficient collection in~\eqref{eq:Ndef} gives~\eqref{eq:N-expansion}.
The constant coefficient is~\eqref{eq:N0-certificate}.  At the other end,
\[
 N_3=(a+2)(a^2+6)>0.
\]
For $N_2$, one obtains
\begin{align*}
 N_2={}&36(r^2+s^2)
 +a(120+84r-36\sqrt3s+36(r^2+s^2))\\
 &+a^2(60+24r-24\sqrt3s+16(r^2+s^2))\\
 &+a^3(16+2(r^2+s^2))+2a^4.
\end{align*}
Thus $N_2=Q_2(r^2+s^2)+L_{2r}r+L_{2s}s+C_2$, where
\begin{align*}
 Q_2&=36+36a+16a^2+2a^3,\\
 L_{2r}&=84a+24a^2,\qquad
 L_{2s}=-\sqrt3(36a+24a^2),\\
 C_2&=120a+60a^2+16a^3+2a^4.
\end{align*}
Applying~\eqref{eq:two-variable-completion} gives
\[
 4Q_2C_2-L_{2r}^2-L_{2s}^2
 =16a(a^6+16a^5+112a^4+318a^3+588a^2+936a+1080)>0,
\]
and hence $N_2>0$.

For $N_1$, put $X=r$, $Y=\sqrt3s$ and
$\nu=X^2+Y^2/3$.  Then
\begin{equation}\label{eq:N1-B}
 N_1=36\nu^2+aB_1+a^2B_2+a^3B_3+a^4B_4+6a^5,
\end{equation}
where
\begin{align*}
 B_1&=\nu(240+168X-72Y+54\nu),\\
 B_2&=432+624X-240Y+492X^2+108Y^2-168XY\\
    &\qquad +(156X-84Y)\nu+38\nu^2,\\
 B_3&=168+72X-168Y+54X^2+46Y^2-84XY\\
    &\qquad +(20X-24Y)\nu+6\nu^2,\\
 B_4&=60+24X-24Y+14\nu.
\end{align*}
The next two subsections prove the required signs.

\subsection{Direct square completions}\label{app:direct-completions}

The coefficient $B_2$ has the exact decomposition
\begin{align*}
 B_2={}&38\left(\nu+\frac{39}{19}X-\frac{21}{19}Y+\frac32\right)^2
 +\frac{448}{19}\left(Y+\frac3{32}X-\frac{1083}{448}\right)^2\\
 &+\frac{3483}{16}\left(X+\frac{2137}{2322}\right)^2
 +\frac{131797}{5418}>0.
\end{align*}
Likewise
\begin{align*}
 B_3+252={}&6\left(\nu+\frac53X-2Y-\frac12\right)^2
 +24\left(Y-\frac{11}{12}X-\frac{15}{4}\right)^2\\
 &+\frac{139}{6}\left(X-\frac{249}{139}\right)^2
 +\frac{1851}{278}>0.
\end{align*}
Further elementary completions give
\[
 B_1\geq\frac{112}{3}\nu\geq0,\qquad
 B_4\geq\frac{132}{7}>0.
\]

\subsection{The conditional middle-coefficient certificate}
\label{app:conditional-certificate}

Set
\[
 \Sigma=4B_2B_4+252B_3.
\]
An exact Gram decomposition is
\begin{equation}\label{eq:sigma-gram}
 \Sigma=2128\nu\left(\nu+\frac{387}{133}X-
 \frac{261}{133}Y+\frac{77}{20}\right)^2
 +\frac1{59850}m^TQm,
\end{equation}
where $m=(1,Y,Y^2,X,XY,X^2)^T$ and
\begingroup
\scriptsize
\[
Q=\mat{
8739057600&-4231634400&478800000&6265576800&-1795500000&47880000\\
-4231634400&2519799114&-385003080&-2526627600&807975000&239400000\\
478800000&-385003080&79782560&281630160&-81252000&-35910000\\
6265576800&-2526627600&281630160&10931106942&-2359670040&2303554680\\
-1795500000&807975000&-81252000&-2359670040&685371360&-243756000\\
47880000&239400000&-35910000&2303554680&-243756000&1122611040
}.
\]
\endgroup
Exact rational $LDL^T$ elimination has positive pivots
\begin{align*}
&8739057600,\quad \frac{79557301866}{169},\quad
\frac{1112696849800960}{299087601},\\
&\frac{2624099206828303859949}{869294413907},\quad
\frac{2531308970961028469920080}{115380521779374043},\\
&\frac{1406694633431943161674826341030}
{10547120712337618624667}.
\end{align*}
Thus $Q\pd$ and $m^TQm>0$ because the first coordinate of $m$ is one.
Equation~\eqref{eq:sigma-gram} proves $\Sigma>0$.

If $B_3\geq0$, then $B_2+aB_3+a^2B_4>0$ is immediate.  If $B_3<0$, the
identity
\[
 4B_2B_4-B_3^2=(4B_2B_4+252B_3)+(-B_3)(B_3+252)
\]
shows that the discriminant of the quadratic
$B_2+aB_3+a^2B_4$ is negative.  Hence that quadratic is positive for every
$a>0$.  This completes the proof that $N_1>0$.

\section{Formal verification}\label{app:formal-verification}

The theorem has been formalized in Lean 4.19.0 with Mathlib 4.19.0.  The
formal development covers the subgroup and character enumeration, the cycle
expansions, the residual normalization identity, the rational Gram
certificate, and the passage from correlation matrices to the full
positive-semidefinite cone.  The public theorem is
\texttt{normalized\_permanental\_dominance\_n4} in the module
\texttt{PermanentalDominance.N4.}\allowbreak
\texttt{NormalizedPermanentalDominance}.
A detailed module map and an axiom audit are included with the source.  In
the module \path{A4GeometricHessian}, the machine-checked counterparts of
the three core coefficient identities are as follows:
\begin{itemize}
\item \path{hessianDeterminant_expansion} for
      \eqref{eq:detTheta-expansion};
\item \path{minimumNumerator_expansion} for \eqref{eq:N-expansion};
\item \path{collinearNumerator_scaled_identity} for
      \eqref{eq:N0-certificate}.
\end{itemize}
The matrix denoted by $\Theta$ here is named \texttt{T} in that module.  The
rank-one identity and the exact interior minimum are theorems
\path{geometricP_collinear_identity} and \path{minimumPoint_value}, located
in \path{A4GeometricRankOne} and \path{A4GeometricMinimum}, respectively.
These references locate the machine-checked counterparts; the identities
and sign certificates used by the mathematical proof remain displayed
above.

The formal source used here was extracted from the immutable merge commit
\href{https://github.com/EndlessLethe/math/commit/c5e4d8c61f08d05ffe7c28bfa1ea732d8827b485}
{\texttt{c5e4d8c61f08}} and is mirrored in the companion repository.

\section*{Acknowledgments}
During the preparation of this work, the author used OpenAI ChatGPT and Codex
to assist with mathematical exploration, development of the Lean
formalization, literature organization, manuscript drafting, language
editing, and \LaTeX\ preparation.  The author reviewed and edited all
generated content and takes full responsibility for the accuracy and
integrity of the work.

\section*{Data and code availability}
No external data were used in this study.  The accompanying Lean
formalization is available from the
\href{https://github.com/EndlessLethe/PermanentalDominanceConjecture}
{companion repository}, together with the manuscript source.  The immutable
formalization provenance is the commit identified in the preceding section.

\section*{Funding and competing interests}
This research did not receive any specific grant from funding agencies in the
public, commercial, or not-for-profit sectors.  The author declares that
there are no known competing financial interests or personal relationships
that could have appeared to influence the work reported in this paper.

\bibliography{references}

\begin{thebibliography}{10}

\bibitem{barretthallloewy1999}
{\sc W.~Barrett, H.~T. Hall, and R.~Loewy}, {\em The cone of class function
  inequalities for the {4}-by-{4} positive semidefinite matrices}, Proceedings
  of the London Mathematical Society, 79 (1999), pp.~107--130,
  \url{https://doi.org/10.1112/S0024611599011909}.

\bibitem{cheonwanless2005}
{\sc G.-S. Cheon and I.~M. Wanless}, {\em An update on {Minc}'s survey of open
  problems involving permanents}, Linear Algebra and its Applications, 403
  (2005), pp.~314--342, \url{https://doi.org/10.1016/j.laa.2005.02.030}.

\bibitem{gellerknill2026}
{\sc S.~Geller and E.~Knill}, {\em Measuring multiparticle indistinguishability
  with the generalized bunching probability}, Physical Review A, 113 (2026),
  p.~042606, \url{https://doi.org/10.1103/1x5l-jd26}.

\bibitem{heyfron1988}
{\sc P.~Heyfron}, {\em Immanant dominance orderings for hook partitions},
  Linear and Multilinear Algebra, 24 (1988), pp.~65--78.

\bibitem{james1992}
{\sc G.~D. James}, {\em Immanants}, Linear and Multilinear Algebra, 32 (1992),
  pp.~197--210, \url{https://doi.org/10.1080/03081089208818163}.

\bibitem{jingliu2025}
{\sc N.~Jing, Y.~Liu, and J.~Zhang}, {\em Immanant inequalities and weight
  spaces}, 2025, \url{https://doi.org/10.48550/arXiv.2508.20382},
  \url{https://arxiv.org/abs/2508.20382}.

\bibitem{lieb1966}
{\sc E.~H. Lieb}, {\em Proofs of some conjectures on permanents}, Journal of
  Mathematics and Mechanics, 16 (1966), pp.~127--134,
  \url{https://doi.org/10.1512/iumj.1967.16.16008}.

\bibitem{marcus1963}
{\sc M.~Marcus}, {\em The permanent analogue of the {Hadamard} determinant
  theorem}, Bulletin of the American Mathematical Society, 69 (1963),
  pp.~494--496, \url{https://doi.org/10.1090/S0002-9904-1963-10975-1}.

\bibitem{merris1987}
{\sc R.~Merris}, {\em The permanental dominance conjecture}, in Current Trends
  in Matrix Theory, R.~Grone and F.~Uhlig, eds., North-Holland, New York, 1987,
  pp.~213--223.

\bibitem{merriswatkins1985}
{\sc R.~Merris and W.~Watkins}, {\em Inequalities and identities for
  generalized matrix functions}, Linear Algebra and its Applications, 64
  (1985), pp.~223--242, \url{https://doi.org/10.1016/0024-3795(85)90279-4}.

\bibitem{pate1991}
{\sc T.~H. Pate}, {\em Partitions, irreducible characters, and inequalities for
  generalized matrix functions}, Transactions of the American Mathematical
  Society, 325 (1991), pp.~875--894,
  \url{https://doi.org/10.1090/S0002-9947-1991-0998356-3}.

\bibitem{pate1998}
{\sc T.~H. Pate}, {\em Row appending maps, {$\Psi$}-functions, and immanant
  inequalities for hermitian positive semi-definite matrices}, Proceedings of
  the London Mathematical Society, 76 (1998), pp.~307--358,
  \url{https://doi.org/10.1112/S0024611598000100}.

\bibitem{pate1999}
{\sc T.~H. Pate}, {\em Tensor inequalities, {$\xi$}-functions and inequalities
  involving immanants}, Linear Algebra and its Applications, 295 (1999),
  pp.~31--59, \url{https://doi.org/10.1016/S0024-3795(99)00035-X}.

\bibitem{rodtes2024}
{\sc K.~Rodtes}, {\em Some remarks on permanental dominance conjecture},
  Advances in Applied Mathematics, 160 (2024), p.~102758,
  \url{https://doi.org/10.1016/j.aam.2024.102758}.

\bibitem{schur1918}
{\sc I.~Schur}, {\em {\"U}ber endliche gruppen und hermitesche formen},
  Mathematische Zeitschrift, 1 (1918), pp.~184--207,
  \url{https://doi.org/10.1007/BF01465095}.

\bibitem{soules1994}
{\sc G.~W. Soules}, {\em An approach to the permanental-dominance conjecture},
  Linear Algebra and its Applications, 201 (1994), pp.~211--229,
  \url{https://doi.org/10.1016/0024-3795(94)90117-1}.

\bibitem{tabata2015}
{\sc R.~Tabata}, {\em The values of the generalized matrix functions of {$3
  \times 3$} matrices}, Hiroshima Mathematical Journal, 45 (2015), pp.~1--8,
  \url{https://doi.org/10.32917/hmj/1428365051}.

\bibitem{wanless2022}
{\sc I.~M. Wanless}, {\em {Lieb}'s permanental dominance conjecture}, in The
  Physics and Mathematics of Elliott Lieb: The 90th Anniversary, vol.~2, EMS
  Press, Berlin, 2022, pp.~501--516, \url{https://doi.org/10.4171/90-2/48}.

\bibitem{zhang2016}
{\sc F.~Zhang}, {\em An update on a few permanent conjectures}, Special
  Matrices, 4 (2016), pp.~305--316,
  \url{https://doi.org/10.1515/spma-2016-0030}.

\end{thebibliography}

\end{document}